\let\ssize\scriptstyle
\newif\ifFIRST\newdimen\MAXright\MAXright0pt
\def\sdynkin{\bgroup\eightpoint\dynkin}
\def\endsdynkin{\enddynkin\egroup}
\def\dynkin{\bgroup\FIRSTtrue\hskip.5em\setbox1\hbox{$\diagup$}%
\setbox2\hbox{$\diagdown$}%
\setbox0\hbox to2\wd1{\hrulefill}%
%\setbox3\hbox{$\circ$}%
\setbox3\hbox{$\bullet$}%
\setbox4\hbox{$\times$}%
\setbox7\hbox{$\circ$}%       (L.K.)
\def\whiteroot##1{\ifFIRST\setbox5\hbox{$##1$}\ifdim\wd5>1.3em%       (L.K.)
\hskip-.5em\hskip.5\wd5\fi\fi\FIRSTfalse%                             (L.K.)
\hskip-.25em\raise1.5\wd3\hbox to0pt{\hss\hskip.45em$%                (L.K.)
\ssize##1$\hss}\copy7\hskip-.25em\setbox6\hbox{$##1$}%                (L.K.)
\MAXright\wd6}%                                                       (L.K.)
\def\root##1{\ifFIRST\setbox5\hbox{$##1$}\ifdim\wd5>1.3em%
\hskip-.5em\hskip.5\wd5\fi\fi\FIRSTfalse%
\hskip-.25em\raise1.5\wd3\hbox to0pt{\hss\hskip.45em$%
\ssize##1$\hss}\copy3\hskip-.25em\setbox6\hbox{$##1$}%
\MAXright\wd6}%
\def\whitedroot##1{\ifFIRST\setbox5\hbox{$##1$}\ifdim\wd5>1.3em% (L.K.)
\hskip-.5em\hskip.5\wd5\fi\fi\FIRSTfalse% (L.K.)
\hskip-.25em\lower1.8\wd3\hbox to0pt{\hss\hskip.45em$%  (L.K.)
\ssize##1$\hss}\copy7\hskip-.25em\setbox6\hbox{$##1$}% (L.K.)
\MAXright\wd6}%
\def\whiterroot##1{\hskip-.25em\copy7\hbox to0pt{\hskip.3em$\ssize##1$\hss}%
\hskip-.25em\setbox6\hbox{\hskip.6em$##1##1$}%
\MAXright\wd6}%
\def\droot##1{\ifFIRST\setbox5\hbox{$##1$}\ifdim\wd5>1.3em%
\hskip-.5em\hskip.5\wd5\fi\fi\FIRSTfalse%
\hskip-.25em\lower1.8\wd3\hbox to0pt{\hss\hskip.45em$%
\ssize##1$\hss}\copy3\hskip-.25em\setbox6\hbox{$##1$}%
\MAXright\wd6}%
\def\rroot##1{\hskip-.25em\copy3\hbox to0pt{\hskip.3em$\ssize##1$\hss}%
\hskip-.25em\setbox6\hbox{\hskip.6em$##1##1$}%
\MAXright\wd6}%
\def\norroot##1{\hskip-.36em\copy4\hbox to0pt{\hskip.3em$\ssize##1$\hss}%
\hskip-.48em\setbox6\hbox{\hskip.6em$##1##1$}%
\MAXright\wd6}%
\def\noroot##1{\ifFIRST\setbox5\hbox{$##1$}\ifdim\wd5>1.3em%
\hskip-.5em\hskip.5\wd5\fi\fi\FIRSTfalse%
\hskip-.36em\raise1.5\wd3\hbox to0pt{\hss\hskip.6em$%
\ssize##1$\hss}\copy4\hskip-.38em\setbox6\hbox{$##1$}%
\MAXright\wd6}%
\def\nodroot##1{\ifFIRST\setbox5\hbox{$##1$}\ifdim\wd5>1.3em%
\hskip-.5em\hskip.5\wd5\fi\fi\FIRSTfalse%
\hskip-.36em\lower1.8\wd3\hbox to0pt{\hss\hskip.6em$%
\ssize##1$\hss}\copy4\hskip-.38em\setbox6\hbox{$##1$}%
\MAXright\wd6}%
\def\nolink{\hskip\wd0}%      (L.K.)
\def\link{\raise.22em\copy0}%
\def\llink##1{\raise.32em\copy0\hskip-\wd0%
\raise.12em\copy0\hskip-.5\wd0\hbox to0pt{\hss$##1$\hss}\hskip.5\wd0}%
\def\lllink##1{\raise.22em\copy0\hskip-\wd0\raise.32em\copy0\hskip-\wd0%
\raise.12em\copy0\hskip-.5\wd0\hbox to0pt{\hss$##1$\hss}\hskip.5\wd0}%
\def\rootupright##1{\hbox to0pt{\raise.45em\copy1\hskip-.25em\raise1.3\ht1%
\hbox{\copy3\hskip.3em$\ssize##1$}\hss}%
\setbox6\hbox{\hskip.6em\copy1\copy1$##1##1$}%
\ifdim\MAXright<\wd6\MAXright\wd6\fi}%
\def\whiterootupright##1{\hbox to0pt{\raise.45em\copy1\hskip-.25em\raise1.3\ht1% (L.K.)
\hbox{\copy7\hskip.3em$\ssize##1$}\hss}% (L.K.)
\setbox6\hbox{\hskip.6em\copy1\copy1$##1##1$}% (L.K.)
\ifdim\MAXright<\wd6\MAXright\wd6\fi}% (L.K.)
\def\norootupright##1{\hbox to0pt{\raise.45em\copy1\hskip-.36em\raise1.3\ht1%
\hbox{\copy4\hskip.3em$\ssize##1$}\hss}%
\setbox6\hbox{\hskip.6em\copy1\copy1$##1##1$}%
\ifdim\MAXright<\wd6\MAXright\wd6\fi}%
\def\rootdownright##1{\hbox to0pt{\raise-.5em\copy2\hskip-.25em\raise-1.35\ht1%
\hbox{\copy3\hskip.3em$\ssize##1$}\hss}\setbox6%
\hbox{\hskip.6em\copy2\copy2$##1##1$}%
\ifdim\MAXright<\wd6\MAXright\wd6\fi}%
\def\whiterootdownright##1{\hbox to0pt{\raise-.5em\copy2\hskip-.25em\raise-1.35\ht1% (L.K.)
\hbox{\copy7\hskip.3em$\ssize##1$}\hss}\setbox6% (L.K.)
\hbox{\hskip.6em\copy2\copy2$##1##1$}% (L.K.)
\ifdim\MAXright<\wd6\MAXright\wd6\fi}% (L.K.)
\def\rootdown##1{\hbox to0pt{\hskip-.05em\vrule height.25em depth.65em%
\hskip-.25em\raise-.95em\hbox{\copy3\hskip.3em$\ssize##1$}\hss}%
\setbox6\hbox{$##1$}%
\ifdim\MAXright<\wd6\MAXright\wd6\fi}%
\def\whiterootdown##1{\hbox to0pt{\hskip-.05em\vrule height.25em depth.65em% (L.K.)
\hskip-.25em\raise-.95em\hbox{\copy7\hskip.3em$\ssize##1$}\hss}% (L.K.)
\setbox6\hbox{$##1$}% (L.K.)
\ifdim\MAXright<\wd6\MAXright\wd6\fi}% (L.K.)
\def\dots{\hskip.5em\cdots\hskip.5em}}%
\def\enddynkin{\ifdim\MAXright>1em\hskip.5\MAXright\else\hskip.5em\fi\egroup}%
\dynkin \root{a_1}\link\root{a_2}\dots\root{a_{n-1}}\link\root{a_n}
\dynkin \root{}\lllink>\root{}\enddynkin$
\dynkin \root{a}\link\root{b}\rootupright{c}\rootdownright{d}\enddynkin$
\dynkin \root{}\link\root{}\rootupright{}\link\root{}\enddynkin$.
\dynkin \whiteroot{0}\link\noroot{0}\dots\root{0}\link\whiteroot{0}\rootupright{0}\whiterootdownright{0}\enddynkin$}. % (L.K.)
\numberwithin{equation}{section}
\let\cal=\mathcal
\def\Hom{\mathop {\rm Hom} \nolimits}
\def\R{\mathbb R}
\def\C{\mathbb C}
\def\N{\mathbb N}
\def\Z{\mathbb Z}
\def\V{\mathbb V}
\def\W{\mathbb W}
\def\S{\mathbb S}
\def\lieg{\mathfrak{g}}
\def\lieh{\mathfrak{h}}
\def\liep{\mathfrak{p}}
\def\lieb{\mathfrak{b}}
\def\univ{{\cal U}}
\def\so{{\mathfrak{so}}}
\def\Spin{{\mathrm{Spin}}}
\def\gl{{\mathfrak{gl}}}
\def\GL{{\mathrm{GL}}}
\def\root{\mathrm{root}}
\def\and{\mathrm{and}}
\def\diag{\mathrm{diag}}
\def\ad{{\mathrm{ad}}}
\def\half{{\frac{1}{2}}}
\def\gvmhom{$M_\liep(\mu)\to M_\liep(\lambda)$ }
\def\mmmm{\hspace{-15pt}}
\begin{document}

\title[Generalized Dolbeault sequences in parabolic geometry]{Generalized Dolbeault sequences in parabolic geometry}
\author{Peter Franek}

\begin{abstract}
In this paper, we show the existence of a sequence of invariant differential operators 
on a particular homogeneous model $G/P$ of a Cartan geometry. 
The first operator in this sequence can be locally identified with the Dirac operator 
in $k$ Clifford variables, $D=(D_1,\ldots, D_k)$, 
where $D_i=\sum_j e_j\cdot \partial_{ij}: C^\infty((\R^n)^k,\S)\to C^\infty((\R^n)^k,\S)$. 
We describe the structure of these sequences in case the dimension $n$ is odd.
It follows from the construction that all these operators are invariant 
with respect to the action of the group $G$. 

These results are obtained by constructing homomorphisms 
of generalized Verma modules, what are purely algebraic objects.
\end{abstract}

\maketitle

\section{Motivation}

There are two basic generalizations of the space of holomorphic functions to higher dimensions. 
One of them is the notion of holomorphic functions in several variables, 
$f: {\R}^{2k}\simeq \C^k\to \C$, $\bar{\partial_j}f=0$
for $j=1,\ldots,k$. 
The second possible generalization deals with s.c. {\it monogenic functions}, 
which are defined on $\R^n$ with values in the
{\it Clifford algebra} or the {\it space of spinors} and solve the {\it Dirac equation}
$\sum_j e_j\cdot \partial_j f=0$. 
They have similar nice properties as holomorphic functions 
and coincide with them for $n=2$ (\cite{GM}). 

Recently, many variations and generalizations of the classical Dirac operator appeared. 
While mathematical physicists study its spectra on different Riemannian spin-manifolds 
and other construct its analogs in non-riemannian geometries (see e.g. \cite{svata}), 
we may define the {\it Dirac operator in several Clifford variables} by
$D: C^\infty((\R^n)^k, \S)\to C^\infty((\R^n)^k, \C^k\otimes\S)$, 
$D=(D_1,\ldots,D_k)$ (after identifying elements of the image with $k$ spinor valued functions), 
$D_i=\sum_j e_j\cdot \partial_{ij}$ where $\S$ is the
(usually complex) spinor space, $x_{uv}$ the standard coordinates on 
$(\R^n)^k, u=1,\ldots,k, j=1,\ldots,n$, and $\cdot$ the Clifford multiplication 
$\R^n\times\S\to\S$.

This is a common generalization of the space of holomorphic functions
in several complex variables ($n=2$, $k$ arbitrary) and the classical Dirac operator ($k=1$). 

%A {\it monogenic function of several Clifford variables} is a function $f: (\R^n)^k\to \S$,
%where $\S$ is the spinor module over $\Cliff(n,\R)$ resp. $\Cliff(n,\C)$ such that
%$D_i f=0$, where $D_i=\sum_j e_j\cdot \partial_{ij}$ for $i=1,\ldots,k$ 
%($x_{uv}$ are variables on $(\R^n)^k$, $u=1,\ldots, k, v=1,\ldots,n$).
%The whole system can be written as $Df=0$ where $D=(D_1,\ldots, D_k),$
%is called {\it Dirac operator in several Clifford variables.} 

Many problems can be studied using a resolution of $D$, 
i.e. the (locally) exact complex of PDE's starting with the operator $D$.
In the case of holomorphic functions in several complex variables, $D$ being the 
Cauchy--Riemann operator ($n=2$, $k$ arbitrary), this is just the Dolbeault sequence.
For $k=2$, $n$ even, the problem was studied in \cite{Tampere, KrumpSoucek}.
However, for arbitrary $n,k$, the form of this resolution is not known yet, except of some 
special cases (see \cite{BuresDam, bluebook, InvRes, Struppa3}).

In this paper, the problem is treated in the framework of {\it parabolic geometry} and
some particular results are obtained for $n$ odd, $k$ arbitrary. We construct sequences
of differential operators starting with the Dirac operator $D$ that are good candidates
for being a resolution (the proof that they indeed form a resolution is still in progress).
Our sequences contain all operators that are {\it invariant} with respect to the action
of a quite large group and continue the Dirac operator.

%\subsection{Dirac operators and Clifford algebras}
%\subsection{Invariant differential operators in parabolic geometries}
Because the space of spinors arises naturally as a fundamental representation of the
Lie group $\Spin(n)$, it is natural to consider the Dirac operator as acting not 
only on $C^\infty(\R^n, \S)$ but rather on more general sections of a spinor bundle over a spin manifold $M$ 
(see \cite{Friedrich}).
The simplest spin structure on the sphere $S^n$ is the bundle 
$\Spin(n+1)\to \Spin(n+1)/\Spin(n)\simeq S^n$ and the associated spinor bundle 
is $\Spin(n+1)\times_{\Spin(n)} \S$. The usual Dirac operator acts between
sections of this bundle and is invariant with respect to the group $\Spin(n+1)$
(the sections $\Gamma(G\times_H \V)$ can be naturally identified with 
invariant functions $C^\infty(G,\V)^H$ and the action of $G$ is $g\cdot f(x):=f(g^{-1}x)$).
However, Dirac operator has a larger group of invariance. Whereas $\Spin(n+1)$
acts on the sphere by rotations, it is well known that Dirac operator is invariant 
with respect to all M\"obius transformations.
This is reflected by the fact that the bundle $\Spin(n+1)\to \Spin(n+1)/\Spin(n)$ is
a reduction of a larger bundle $\Spin(n+1,1)/P$, where $\Spin(n+1,1)$ acts on the 
null-cone of a form $g$ of signature $(n+1,1)$ that defines the group $\Spin(n+1,1)$.
The projectivisation of this
null-cone is homeomorphic to the sphere $S^n$ and $P$ is the stabilizer of one line.
It was shown in \cite{srni} that considering $\S_1$ as a representation of $P$ with 
highest weight 
$$
\dynkin \noroot{\frac{n}{2}-1}\link\whiteroot{0}\link\ldots\link\whiteroot{0}
\whiterootdownright{0}\whiterootupright{1}\enddynkin\quad\hbox{resp.}\quad
\dynkin \noroot{\frac{n}{2}-1}\link\whiteroot{0}\link\ldots\link\whiteroot{0}
\llink>\whiteroot{1}\enddynkin
$$ and $\S_2$ a representation of $P$ with highest weight 
$$
\dynkin \noroot{\frac{n}{2}}\link\whiteroot{0}\link\ldots\link\whiteroot{0}
\whiterootdownright{1}\whiterootupright{0}\enddynkin\quad\hbox{resp.}\quad
\dynkin \noroot{\frac{n}{2}}\link\whiteroot{0}\link\ldots\link\whiteroot{0}
\llink>\whiteroot{1}\enddynkin,
$$
the Dirac 
operator is a $\Spin(n+1,1)$-invariant differential operator 
$D: \Gamma(\Spin(n+1,1)\times_{P} \S_1)\to \Gamma(\Spin(n+1,1)\times_P \S_2)$.
In this sense, the Dirac operator is conformally invariant, as $\Spin(n+1,1)$
(or, more exactly, its connected component) 
is the double-cover of the group of all M\"obious transformations.

The subalgebra $P$ is a {\it parabolic subalgebra} of $G=\Spin(n+1,1)$, 
i.e. its Lie algebra $\liep$ contains a Borel algebra $\lieb$ of $\lieg$, 
the Lie algebra of $G$.  
The bundle $G\to G/P$ together with the Maurer-Cartan form on $T(G)$ is an example
of s.c.  {\it parabolic geometry} (see \cite{Cap, Sharpe}).

In \cite{srni}, an analogous construction is described for the group $G=\Spin(n+k,k)$
and $P$ being a parabolic subgroup fixing a maximal vector subspace of 
the null cone of the metric of signature $(n+k,k)$ defining $\Spin(n+k,k)$.
The reductive part of $P$ is isomorphic to
$\GL(k)\times \Spin(n)$. The Lie algebra $\liep$ of $P$ determines a gradation
of the Lie algebra $\lieg$ of $G$ so that $\lieg=\oplus_{j=-2}^2 \lieg_j$ and 
$\liep=\lieg_0\oplus \lieg_1\oplus \lieg_2$.
Again, choosing proper irreducible $P$-modules $\V_1$ resp. $\V_2$ with
highest weights 
\begin{eqnarray*}
&&\mmmm \dynkin\whiteroot{0}\link\ldots\link\whiteroot{0}\link\noroot{\frac{n}{2}-1}\link\whiteroot{0}\link\ldots\link\whiteroot{0}\llink>\whiteroot{1}\enddynkin\quad\hbox{resp.}\\
&&\mmmm \dynkin\whiteroot{1}\link\whiteroot{0}\link\ldots\link\whiteroot{0}\link\noroot{\frac{n}{2}-1}\link\whiteroot{0}\link\ldots\link\whiteroot{0}\llink>\whiteroot{1}\enddynkin
\end{eqnarray*} 
(and similar for $n$ even), we showed in \cite{srni, dizertace}
that there exists a $G$-invariant differential operator 
$D: \Gamma(G\times_P \V_1)\to \Gamma(G\times_P \V_2)$ and, identifying local sections
in the neighborhood of $eP$ with $\V_i$-valued functions on the vector space 
$\lieg_-=\oplus_{j<0} \lieg_j$ in a natural way 
and restricting to functions that are constant in $\lieg_{-2}\subset\lieg_-$,
this operator coincides with the Dirac operator in $k$ Clifford variables (identifying $\lieg_{-1}\simeq (\R^n)^k$
as the adjoint representation of $\lieg_0\simeq \gl(k)\times\so(n)$).

The question is, whether we can find sequences
of $G$-invariant differential operators continuing the operator $D$. In case of the Dirac
operator in one variable ($k=1$), this is not possible. 
We showed in \cite{Tampere} that for $k=2$, there exist two further $G$-invariant differential 
operators so that they form a complex together with the first one. 

In general, for any semisimple Lie group $G$,  a parabolic subgroup $P$ and
some $P$-modules $\V_1, \V_2$, 
the $G$-invariant differential operators between sections of vector bundles
$D: \Gamma(G\times_P \V_1)\to \Gamma(G\times_P \V_2)$ are in $1-1$ correspondence
with the $\lieg$-homomorphisms of generalized Verma modules $M_{\liep}(\V_2^*)\to M_{\liep}(\V_1^*)$
induced by dual representations $\V_2^*$ and $\V_1^*$ (see \cite{Cap}). 
Therefore, the generalized Verma modules and their homomorphisms 
will be studied in the rest of this paper.

\section{Basics on Verma modules}
\subsection{Bruhat ordering}

Let as assume that $\liep$ is a parabolic subalgebra of $\lieg$, i.e. a subalgebra containing the Borel subalgebra
$\lieb$. This induces a gradation 
$\oplus_{j=-k}^k \lieg_j$ of $\lieg$ so that $\liep=\sum_{j\geq 0} \lieg_j$. Let $\lieh$ 
be a fixed Cartan subalgebra of $\lieg$ and $\liep$,
$\Phi^+$ a set of positive roots of $\lieg$ (and also of $\liep$) and $\Delta$ the set of
simple roots, compatible with $\Phi^+$. There is a $1-1$ correspondence between subsets 
$\Sigma$ of $\Delta$ and parabolic subalgebras $\liep_\Sigma\subset\lieg$, where $\liep_\Sigma$
contains the Cartan subalgebra, all positive root spaces and all those negative root spaces $\lieg_{-\beta}$, 
such that $\beta$ can be expressed as a sum of simple roots from $\Delta-\Sigma$. These roots form the set of simple roots of the algebra $\lieg_0$ from the associated grading $\oplus_{j=-k}^k \lieg_j$.
In the Dynkin diagram, we draw the simple roots in $\Sigma$ as crossed ($\times$). 

For any pair $(\lieg, \liep)$ there exists a unique element $E\in \lieg$ called 
{\it grading element} so that $\ad(E)(X)=jX$ for any $X\in\lieg_j,\,\,\,j=-k,\ldots,k$.

For each $\beta\in\Phi^+$, the {\it root reflection} $s_\beta$ is a reflection in $\lieh^*$ fixing the
hyperplane orhogonal to $\beta$ in the Killing metric. In coordinates, $s_\beta(\gamma)=\gamma-\gamma(H_\beta)\beta$
where $H_\beta$ is the $\beta$-coroot (see e.g. \cite{Humphries}).
The choice of $\Delta$ determines 
the length $l(w)$ of any element $w$ of the Weyl group $W$ of 
$\lieg$. It is the minimal number $k$ such that $w=s_{\alpha_{i_1}}\ldots s_{\alpha_{i_k}}$, 
$\alpha_{i_j}\in \Delta$, $s_{\alpha_i}$ being simple root reflections. 
This defines the Bruhat ordering on $W$ in the following way:
$w\leq w'$ if and only if there exist $w=w_0\to w_1\to w_2\to\ldots\to w_l=w'$, where 
$w_i\to w_{i+1}$ means that $w_{i+1}=s_{\beta_i} w_i$ for some $\beta_i\in\Phi^+$ and
the length $l(w_{i+1})=l(w_{i})+1$.

\subsection{Generalized Verma modules (GVM)}

Let $\V$ be a (usually finite dimensional) irreducible $\liep$-module with
highest weight $\lambda$. 
The generalized Verma module (further GVM), introduced by 
Lepowski (\cite{Lepowski}) is defined by $M_{\liep}(\V):=\univ(\lieg)\otimes_{\univ(\liep)} \V$,
where $\univ(\lieg)$ is the universal enveloping algebra of $\lieg$, considered as a left 
$\univ(\lieg)$ and a right $\univ(\liep)$-module. $M_{\liep}(\V)$ is a highest weight module with highest weight
$\lambda$ and highest weight vector $1\otimes v_\lambda$, where $v_\lambda$ is a highest weight 
vector in $\V$.
As a $\lieg_-$-module and $\lieg_0$-module, 
$M_{\liep}(\V)\simeq \univ(\lieg_-)\otimes \V$. The GVM is uniquely determined by its 
highest weight $\lambda$, therefore we will sometimes denote the GVM with highest weight $\lambda$
by $M_{\liep}(\lambda+\delta)$, where $\delta=\half\sum_{\beta\in\Phi^+} \beta$.
Assuming that $\V$ is finite dimensional, the set of GVM's is 
isomorphic to the set of $\liep$-dominant and $\liep$-integral weights 
(this means weights $\lambda$ such that $\lambda(H_\alpha)$ is non-negative and integral for each $\alpha\in\Delta-\Sigma$).
Such weights will be further denoted by $P_{\liep}^{++}$.

If $\liep=\lieb=\lieh\oplus_{\beta\in\Phi^+}\lieg_\beta$ is the Borel subalgebra of $\lieg$, the GVM
$M_{\lieb}(\V)$ is called true Verma module, or simply Verma module 
(in this case, $\V$ is a one-dimensional representation 
of $\lieb$ and its weight can be any $\lambda\in\lieh^*)$. Each highest weight module with highest weight $\lambda$ is isomorphic to some factor of the Verma module $M_{\lieb}(\lambda+\delta)$.

\subsection{Duality between GVM homomorphisms and invariant differential operators}%
A $G$-invariant differential operator $D: \Gamma(G\times_P \V)\to \Gamma(G\times_P \W)$ 
is completely determined by the values $Ds(eP)$ on sections ($e\in G$ is the identity element). 
If the operator is of order $k$, the value $Ds(eP)$ depends only on
the $k$-jet $J^k_{eP} s$ of a section $s$ in $eP$. So, the operator $D$ is determined by a 
map $\tilde{D}: J^k_{eP}(G\times_P \V)\to \W$ that evaluates the image of a section $s$ in $eP$, identifying the fiber over $eP$ with $\W$ in a natural way. More precisely, $D(s)(eP)=[e,\tilde{D}(j^k_{eP}s))]_P$.
Because $D$ is $G$-invariant, $\tilde{D}$ has to be $P$-invariant, the action of $P$ on the jets being the action on representatives.

The $P$-module $J_{eP}^k(G\times_P \V)$ of $k$-jets of sections is naturally isomorphic to the space 
of $k$-jets of $P$-invariant functions $J_{e}^{k}(C^\infty(G,\V)^P)$ 
(the action of $P$ here being $(p\cdot f)(x)=f(p^{-1}x)$).
It can be shown that this is dual, as a $P$-module, to 
$\univ_k(\lieg)\otimes_{\univ(\liep)} \V^*$
(where $\univ_k(\lieg)$ is the $k$-th
filtration of $\univ(\lieg)$) and the duality is given by 
\begin{equation}
\label{duality}
(Y_1\ldots Y_l\otimes_{\univ(\liep)} A) (j_e^k f):=A((L_{Y_1}\ldots L_{Y_l} f)(e))
\end{equation}
for $l\leq k$, $A\in \V^*$, $j_e^k f$ the $k$-jet of $f$ in $e$, 
$Y_j\in\lieg$ and $L_{Y_j}$ the derivation with respect to the
left invariant vector fields on $G$ induced by $Y_j$ (see \cite{Cap} for details). 

Any $P$-homomorphism $\tilde{D}: J_{e}^k(C^\infty(G,\V)^P)\to\W$
is determined by its dual map 
$\tilde{D}^*: \W^*\to J_{e}^{k}(C^\infty(G,\V)^P)^*$
and we see from $(\ref{duality})$ that the right hand side can be identified with a $P$-submodule of 
$M_{\liep}(\V^*)$. Further, each $P$-homomorphism 
$\tilde{D}^*: \W^*\to M_{\liep}(\V^*)$ 
can be extended to a $(\lieg,P)$-homomorphism $M_{\liep}(\W^*)\to M_{\liep}(\V^*)$
of GVM's by 
$y_1\ldots y_l\otimes v\mapsto y_1\ldots y_l \tilde{D}^*(v)$ for $y_j\in\lieg_-$, 
the action of $\liep$ on $\W^*$ being the infinitesimal action of $P$ 
(we identified $M_{\liep}(\W^*)\simeq \univ(\lieg_-)\otimes \W^*)$.

It follows that there is a duality between invariant linear differential operators 
$D: \Gamma(G\times_P \V)\to \Gamma(G\times_P \W)$ of any finite order and $(\lieg,P)$-homomorphisms
of GVM's $M_{\liep}(\W^*)\to M_{\liep}(\V^*)$. Note that, if the inducing representations
$\V$ and $\W$ are both $P$-modules, then each $\lieg$-homomorphism $M_{\liep}(\V)\to M_{\liep}(\W)$ 
is a $(\lieg, P)$-homomorphism as well. 

Finally, note that if the Lie groups $(G,P)$ are real but the representation spaces $\V, \W$ are complex
representations of $P$, then the real GVM $M_{\liep}(\V)$ is ($\lieg$-) isomorphic to the complex GVM induced by $\V$, considered
as a complex representation of the complexified Lie algebra $\liep^{\C}$. Therefore, we may restrict to GVM's associated
to complex Lie algebras $(\lieg^{\C}, \liep^{\C})$.

\subsection{Homomorphisms of GVM's}
The GVM's are highest weight modules, therefore they admit central characters. As each
$\lieg$-homomorphism of highest weight modules must preserve the central character, it follows from 
Harris-Chandra theorem (see, e.g. \cite{Humphries}) that a $\lieg$-homomorphism \gvmhom may exist only if $\mu$ and $\lambda$
are on the same orbit of the Weyl group $W$ of the Lie algebra $\lieg$. (Recall that 
the highest weights of these modules are $\mu-\delta$ and $\lambda-\delta$.) 
For $\lambda\in P_{\liep}^{++}+\delta$, there exist only a finite number
of weights $\mu\in P_{\liep}^{++}+\delta$ on the same orbit of the Weyl grup.

In the case of true Verma modules, there is a classification of their homomorphisms, done by 
Verma and Bernstein-Gelfand-Gelfand (\cite{bgg1, bgg2, Verma}), summarized 
in the following statements:

\begin{theorem}
\label{truevermamodulesmap}
Let $\mu, \lambda\in\lieh^*$. Each homomorphism $M_{\lieb}(\mu)\to M_{\lieb}(\lambda)$
is injective and $\dim(\Hom(M_{\lieb}(\mu), M_{\lieb}(\lambda)))\leq 1$. Therefore, we can write
$M_{\lieb}(\mu)\subset M_{\lieb}(\lambda)$ in such case.

A nonzero homomorphism of Verma modules $M_{\lieb}(\mu)\to M_{\lieb}(\lambda)$ exists if and
only if there exist weights $\lambda=\lambda_0, \lambda_1,\ldots,\lambda_k=\mu$
so that $\lambda_{i+1}=s_{\beta_i}\lambda_{i}$ for some positive roots $\beta_i$
and $\lambda_i(H_{\beta_i})\in\N$ for all $i$ ($s_\beta\in W$
is the $\beta$-root reflection). Equivalently, $\lambda_i-\lambda_{i-1}$ is 
a positive integral multiple of some positive root for all $i$.

Let $\lambda\in P_{\lieg}^{++}+\delta$ (i.e. $\lambda-\delta$ is $\lieg$-dominant 
and $\lieg$-integral). Then there exists a nonzero homomorphism 
$M_{\lieb}(w'\lambda)\to M_{\lieb}(w\lambda)$ if and only if $w\leq w'$ in the Bruhat ordering.

If $\lambda$ is only $\lieg$-dominant ($\lambda(H_\beta)>0$ for all $\beta\in\Phi^+$), then
the existence of a nonzero standard homomorphism $M_{\lieb}(w'\lambda)\to M_{\lieb}(w\lambda)$ 
still implies $w\leq w'$ in the Bruhat ordering 
(but not the opposite).
\end{theorem}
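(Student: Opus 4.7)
The plan is to prove the four assertions in the order they are stated, because each one will rely on the previous.

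\textbf{Injectivity and one-dimensionality.} Any homomorphism $\varphi\colon M_{\lieb}(\mu)\to M_{\lieb}(\lambda)$ is determined by the image of the highest weight vector $1\otimes v_\mu$, which must be a vector of weight $\mu$ annihilated by all positive root spaces. First I would use the PBW decomposition $M_{\lieb}(\lambda)\simeq \univ(\lieb^-)\otimes \C_\lambda$ as a free $\univ(\lieb^-)$-module of rank one, and observe that $\varphi(1\otimes v_\mu)=u\otimes v_\lambda$ for some fixed $u\in\univ(\lieb^-)$. Because $\univ(\lieb^-)$ is an integral domain, left multiplication by $u$ is injective, hence $\varphi$ is injective. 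For the dimension bound I would show that the space of singular vectors of weight $\mu$ in $M_{\lieb}(\lambda)$ is at most one-dimensional; this follows by arguing that two independent singular vectors would generate isomorphic Verma submodules with non-trivial intersection, contradicting the integral-domain property just used.

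\textbf{Existence of homomorphisms via a chain.} For the ``if'' direction, the crucial reduction is: \emph{if $\lambda(H_\beta)\in\N$ for some positive root $\beta$, then $M_{\lieb}(s_\beta\lambda)\hookrightarrow M_{\lieb}(\lambda)$}. For a simple root $\beta=\alpha$ with $n:=\lambda(H_\alpha)\in\N$, the explicit vector $f_\alpha^{\,n}\otimes v_\lambda$ is annihilated by every $e_{\alpha'}$ (direct check using the $\lieg_\alpha$-triple and $[e_{\alpha'},f_\alpha]=0$ for simple $\alpha'\neq\alpha$), hence generates the desired embedding. For a non-simple $\beta$ I would use the standard reduction trick: pick $w\in W$ with $w\beta$ simple, and argue the simple case on $w\lambda$ and transport it back by functoriality of the Jantzen / translation arguments, or more concretely by Verma's original route of applying a sequence of simple reflections and chasing the singular vector. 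Composing the embeddings $M_{\lieb}(\lambda_{i+1})\hookrightarrow M_{\lieb}(\lambda_i)$ along the given chain yields the nonzero homomorphism $M_{\lieb}(\mu)\to M_{\lieb}(\lambda)$. The ``only if'' direction is the genuinely hard step and is where I expect the main obstacle to sit: Harish--Chandra's theorem forces $\mu$ and $\lambda$ into one $W$-orbit, but this is far from enough. I would invoke Jantzen's filtration together with the Shapovalov determinant formula, which expresses the vanishing of the determinant of the contravariant form on weight spaces as a product over positive roots $\beta$ of linear factors in $\lambda(H_\beta)$; analysing the multiplicities of composition factors $L(\mu)$ in $M_{\lieb}(\lambda)$ then forces the chain condition.

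\textbf{Bruhat order characterization for $\lambda\in P^{++}_{\lieg}+\delta$.} Since $\lambda-\delta$ is regular dominant and integral, the orbit $W\lambda$ is in bijection with $W$ via $w\mapsto w\lambda$. I would then translate the chain condition from the previous part into a statement about $W$: a step $\lambda_{i+1}=s_{\beta_i}\lambda_i$ with $\lambda_i(H_{\beta_i})\in\N^+$ corresponds, when $\lambda_i=w_i\lambda$, to a multiplication $w_{i+1}=s_{\beta_i}w_i$ that strictly increases length, exactly the elementary step defining the Bruhat order on $W$. Combined with the previous part, this gives the equivalence $w\leq w'$ iff a nonzero homomorphism $M_{\lieb}(w'\lambda)\to M_{\lieb}(w\lambda)$ exists.

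\textbf{Regular (not necessarily integral) case.} Here $W\lambda$ is still in bijection with $W$ because $\lambda$ is regular, so the chain analysis above still assigns to each elementary chain step a length-increasing multiplication by a root reflection. Hence the existence of a nonzero standard homomorphism $M_{\lieb}(w'\lambda)\to M_{\lieb}(w\lambda)$ still forces $w\leq w'$; the converse may fail because in the non-integral case not every pair in Bruhat order can be realized by a chain of reflections each satisfying the integrality condition $\lambda_i(H_{\beta_i})\in\N$. I would record this asymmetry explicitly to finish the proof.
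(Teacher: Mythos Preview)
The paper does not give a proof of this theorem. It is stated as a classical result, explicitly attributed to Verma and Bernstein--Gelfand--Gelfand with references to \cite{bgg1, bgg2, Verma}, and is then used as a black box throughout the remainder of the paper. There is therefore no proof in the paper to compare your sketch against.

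For completeness: your outline follows the standard classical route. Injectivity and multiplicity one via the PBW isomorphism and the integral-domain property of $\univ(\lieg_-)$ is the textbook argument; the explicit singular vector $f_\alpha^{\,n}\otimes v_\lambda$ for simple $\alpha$ is Verma's construction; the passage from simple to arbitrary positive roots, and especially the ``only if'' direction, are the genuinely hard parts (completed by BGG), and you correctly flag them as the places where the real work lies. None of this, however, is carried out in the paper itself---the theorem is simply quoted.
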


Because $M_{\liep}(\lambda)$ is a highest weight module, it is isomorphic to a factor of 
true Verma module $M_{\lieb}(\lambda)/W$. It was proved by Lepowski that 
$W\simeq \sum_{\alpha\in\Delta-\Sigma}M_{\lieb}(s_\alpha\lambda)$ ($\Sigma\subset\Delta$ determines
the parabolic subalgebra $\liep$ and all the modules $M_{\lieb}(s_\alpha\lambda)$ are considered
as submodules of $M_{\lieb}(\lambda)$). A homomorphism
\gvmhom is called standard, if it is a factor of a true Verma module homomorphism 
$M_{\lieb}(\mu)\to M_{\lieb}(\lambda)$. Up to multiple, there exists at most one standard homomorphism 
from $M_{\liep}(\mu)$ to $M_{\liep}(\lambda)$. 
% If a homomorphism \gvmhom is not standard, it is called nonstandard. 
The following is known about standard homomorphisms of GVM's:

\begin{theorem}
\label{zeromap}
Let $\mu, \lambda\in P_{\liep}^{++}+\delta$,
$i:M_{\lieb}(\mu)\to M_{\lieb}(\lambda)$ be a homomorphism of Verma modules.
Then the standard homomorphism $M_{\liep}(\mu)\to M_{\liep}(\lambda)$ is zero if and only 
if there exists 
$\alpha\in \Delta-\Sigma$ so that $i(M_{\lieb}(\mu))\subset M_{\lieb}(s_\alpha\lambda)$ (identifying
$M_{\lieb}(s_\alpha\lambda)$ with a submodule of $M_{\lieb}(\lambda)$).
\end{theorem}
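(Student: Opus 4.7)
My plan is to reduce the statement, via Lepowski's presentation of $M_\liep(\lambda)$, to a question about singular vectors in Verma modules, and then to use the uniqueness part of Theorem~\ref{truevermamodulesmap} to cut the problem down to a combinatorial claim about the Bruhat order.

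By Lepowski's description cited just above, the projection $M_\lieb(\lambda)\to M_\liep(\lambda)$ has kernel $W:=\sum_{\alpha\in\Delta-\Sigma}M_\lieb(s_\alpha\lambda)$, and the standard homomorphism $M_\liep(\mu)\to M_\liep(\lambda)$ is by construction the map induced by $i$ on the quotients. Since $v_\mu$ generates $M_\lieb(\mu)$, the standard homomorphism vanishes iff $i(v_\mu)\in W$, and the ``if'' direction of the theorem is then immediate from $M_\lieb(s_\alpha\lambda)\subseteq W$. For the ``only if'' direction I use that, by Theorem~\ref{truevermamodulesmap}, $\dim\Hom(M_\lieb(\mu),M_\lieb(\lambda))\leq 1$, so the singular vectors of weight $\mu-\delta$ in $M_\lieb(\lambda)$ form a space of dimension at most one, spanned by $i(v_\mu)$. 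Consequently, as soon as some Verma submodule $M_\lieb(s_\alpha\lambda)$ contains a non-zero singular vector of weight $\mu-\delta$, that vector is automatically a scalar multiple of $i(v_\mu)$, giving $i(v_\mu)\in M_\lieb(s_\alpha\lambda)$. It is thus enough to show that if $i(v_\mu)\in W$, then $\Hom(M_\lieb(\mu),M_\lieb(s_\alpha\lambda))\neq 0$ for some $\alpha\in\Delta-\Sigma$, or equivalently, by the Bruhat criterion of Theorem~\ref{truevermamodulesmap}, that the Weyl element $w$ realising the chain from $\lambda$ to $\mu$ satisfies $s_\alpha\leq w$ for some $\alpha\in\Delta-\Sigma$.

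I would verify the last claim by writing $i(v_\mu)=\sum_\alpha v_\alpha$ with $v_\alpha\in M_\lieb(s_\alpha\lambda)$, each of weight $\mu-\delta$, and tracking the weight components of a Shapovalov-type representative of $i(v_\mu)$ inside $\univ(\lieg_-)v_\lambda$: the mere existence of such a decomposition forces $\mu-\delta\leq s_\alpha\lambda-\delta$ in the weight order for some $\alpha\in\Delta-\Sigma$, and combining this with the chain of reflections guaranteed by the existence of $i$ yields the Bruhat relation $s_\alpha\leq w$. \textbf{The main obstacle} is that the sum $\sum_\alpha M_\lieb(s_\alpha\lambda)$ is not direct: distinct Verma submodules share further Verma submodules $M_\lieb(u\lambda)$ for $u$ lying above both $s_\alpha$ and $s_{\alpha'}$ in the Bruhat order, so one cannot immediately extract a singular summand from the decomposition. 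To circumvent this I would induct on $|\Delta-\Sigma|$: quotient $M_\lieb(\lambda)$ by a single $M_\lieb(s_{\alpha_0}\lambda)$, thereby passing to a GVM $M_{\liep'}(\lambda)$ for a larger parabolic $\liep'\supset\liep$, and iterate. The non-regular case, in which $w$ is not uniquely determined by $\mu$ and $\lambda$, is handled by fixing $w$ of minimal length in its coset.
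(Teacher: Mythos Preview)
The paper does not prove Theorem~\ref{zeromap}; it is stated there as a known result (introduced by ``The following is known about standard homomorphisms of GVM's''), with the content attributed implicitly to Lepowski~\cite{Lepowski}. There is therefore no proof in the paper against which to compare your proposal.

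As for the proposal on its own merits: the reduction via Lepowski's presentation $M_\liep(\lambda)=M_\lieb(\lambda)/W$ and the ``if'' direction are correct, and your use of the one-dimensionality of $\Hom(M_\lieb(\mu),M_\lieb(\lambda))$ to reduce the ``only if'' direction to exhibiting a nonzero map $M_\lieb(\mu)\to M_\lieb(s_\alpha\lambda)$ for some $\alpha$ is the right strategic move. However, the gap you yourself flag is real and is not closed by what follows. From $i(v_\mu)=\sum_\alpha v_\alpha$ with $v_\alpha\in M_\lieb(s_\alpha\lambda)$ you correctly extract $\mu-\delta\leq s_\alpha\lambda-\delta$ in the root order for some $\alpha$, but the sentence ``combining this with the chain of reflections guaranteed by the existence of $i$ yields the Bruhat relation $s_\alpha\leq w$'' is unjustified: weight-order comparability together with $M_\lieb(\mu)\subset M_\lieb(\lambda)$ does not imply $M_\lieb(\mu)\subset M_\lieb(s_\alpha\lambda)$. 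Your inductive workaround is also miscast: quotienting $M_\lieb(\lambda)$ by a single $M_\lieb(s_{\alpha_0}\lambda)$ produces the GVM for the parabolic $\liep'$ with $\Delta-\Sigma'=\{\alpha_0\}$, which in the paper's conventions is a \emph{smaller} parabolic ($\Sigma'\supset\Sigma$), not a larger one, so the proposed induction on $|\Delta-\Sigma|$ does not run as described. The standard route to the missing step (a singular vector lying in a sum of Verma submodules already lies in one of them) goes through the BGG description of the submodule lattice of $M_\lieb(\lambda)$ or through Jantzen-filtration arguments, and is less elementary than your sketch suggests.
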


Let us denote by $W_{\liep}$ the subgroup of $W$ generated by simple root reflections 
\{$s_\alpha,\,\,\alpha\in\Delta-\Sigma\}$ and 
$W^p$ the subset of $W$ consisting of those $w\in W$ so that 
$w\tilde\lambda$ is $\liep$-dominant for each $\lieg$-dominant weight $\tilde\lambda$.
Any $w\in W$ can be uniquely decomposed $w=w_p w^p$
where $w_p\in W_p$ and $w^p\in W^p$ and the length $l(w)=l(w_p)+l(w^p)$. 
We define the {\it parabolic Hasse graph} for $(\lieg, \liep)$ to be the set $W^p$ of vertices with 
arrows $w\to w'$ if and only if $w\to w'$ in $W$. 

The following two properties of the parabolic Hasse graph will be used later
(for the proof, see \cite{Bjorner}):
\begin{lemma}
\label{path}
(1)
If $w'=s_\gamma w$, then either $w\leq w'$ or $w'\leq w$ in the Bruhat ordering.

(2)
Let $w, w'\in W^p$ and $w\leq w'$ in the Bruhat ordering. Then there exists a path 
$w\to w_1\to \ldots \to w_n\to w'$ so that all $w_i$ are in $W^p$.
\end{lemma}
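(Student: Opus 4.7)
Plan for part (1). Since $w' = s_\gamma w$, the parity argument (any reflection has determinant $-1$) shows $l(w) \neq l(w')$; take $l(w) < l(w')$ without loss of generality. Fix a reduced expression $w' = s_{\alpha_{i_1}}\cdots s_{\alpha_{i_m}}$ in simple reflections. The condition $l(s_\gamma w') < l(w')$ activates the strong exchange condition for Coxeter groups, yielding an index $j$ with
$$
w \;=\; s_{\alpha_{i_1}}\cdots\widehat{s}_{\alpha_{i_j}}\cdots s_{\alpha_{i_m}}.
$$
Thus $w$ equals a subword of a reduced expression for $w'$, and after possibly further deletions remains a reduced subword; the subword criterion of Bruhat order then gives $w \leq w'$, which is the desired conclusion.

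Plan for part (2). I would induct on the length gap $d = l(w') - l(w)$. The base case $d = 1$ is trivial: the cover $w \to w'$ in $W$ already lies in the parabolic Hasse graph since $w, w' \in W^p$. For $d \geq 2$, it suffices to produce some $w_1 \in W^p$ with $w \to w_1$ and $w_1 \leq w'$, because the inductive hypothesis applied to $(w_1, w')$, whose length gap is $d - 1$, then completes the path inside $W^p$.

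To produce $w_1$: by the chain property of Bruhat order on $W$, pick any $u \in W$ with $w \to u$ and $u \leq w'$. If $u \in W^p$, take $w_1 = u$. Otherwise, $u$ has a left descent at some $\alpha \in \Delta - \Sigma$, i.e.\ $s_\alpha u < u$, whereas $w, w' \in W^p$ forces $s_\alpha w > w$ and $s_\alpha w' > w'$. The lifting property of Bruhat order applied to $s_\alpha$ then permits $u$ to be traded for a different cover $u'$ of $w$ that still lies in $[w, w']$ but is closer to $W^p$, in the sense that its unique $W_p W^p$-decomposition has a strictly shorter $W_p$-part. Iterating this adjustment finitely many times removes the $W_p$-factor entirely and delivers the sought $w_1 \in W^p$.

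The main obstacle is precisely this corrective step: explicitly constructing the replacement cover $u'$ and verifying that it both preserves the bound $u' \leq w'$ and strictly reduces the $W_p$-part of the current cover requires the full combinatorial analysis of parabolic Bruhat intervals. This is the technical content of Bj\"orner's paper cited by the authors, and I would defer to that reference for the routine but delicate bookkeeping; everything else in the argument is formal induction.
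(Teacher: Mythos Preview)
The paper does not give its own proof of this lemma at all; immediately after the statement it simply says ``for the proof, see \cite{Bjorner}'' and moves on. So there is nothing to compare your argument against except that citation.

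Your treatment of (1) via parity, strong exchange, and the subword criterion is the standard argument and is correct. Your outline of (2) is also along the right lines: the content is that $W^p$ with the restricted Bruhat order is graded by length, and the proof does proceed by inductively manufacturing a cover of $w$ inside $W^p$ that still lies below $w'$. You correctly identify the replacement step (pushing a cover $u\notin W^p$ back into $W^p$ while maintaining $u\leq w'$) as the only nontrivial point, and you defer it to the same reference the paper cites. One small sharpening: in your setup, once $u$ covers $w\in W^p$ and has a left descent at $\alpha\in\Delta-\Sigma$, the lifting property already forces $u=s_\alpha w$, so the ``$W_p$-part'' of $u$ has length exactly one; the iteration you describe therefore terminates in a single step, though one still has to check the new candidate remains below $w'$. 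In any case, your sketch already goes further than the paper, which offers only the pointer to Bj\"orner--Brenti.
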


The following theorem can be used to prove the existence of a standard GVM homomorphism:

\begin{theorem}
\label{genlepowski}
Let $\tilde{\lambda}$ be a strictly dominant weight (i.e. $\tilde\lambda(H_\beta)>0$ for 
$\beta\in\Phi^{+}$), 
$w,w'\in W^p$, $w\to w'$ in the parabolic Hasse graph 
for $(\lieg, \liep)$ and assume that $w\tilde\lambda,\, w'\tilde\lambda\in P_{\liep}^{++}+\delta$. 
Further, suppose that there exists a nonzero homomorphism of true Verma modules 
$M_{\lieb}(w'\tilde\lambda)\to M_{\lieb}(w\tilde\lambda)$. 
Then the standard homomorphism $M_{\liep}(w'\tilde\lambda)\to M_{\liep}(w\tilde\lambda)$ is nonzero.
\end{theorem}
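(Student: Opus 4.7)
The plan is to argue by contradiction: assume the standard homomorphism $M_{\liep}(w'\tilde\lambda)\to M_{\liep}(w\tilde\lambda)$ vanishes, invoke Theorem~\ref{zeromap} to squeeze another Verma module between $M_{\lieb}(w'\tilde\lambda)$ and $M_{\lieb}(w\tilde\lambda)$, and then derive a contradiction from the characterization of $W^p$ as a set of minimal length coset representatives for $W_{\liep}\backslash W$.

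In more detail, the first step is to exploit strict dominance of $\tilde\lambda$ so that Theorem~\ref{truevermamodulesmap} gives a two-way translation between inclusions of Verma modules and the Bruhat order on the Weyl orbit. The given nonzero homomorphism of true Verma modules becomes $M_{\lieb}(w'\tilde\lambda)\subset M_{\lieb}(w\tilde\lambda)$, consistent with the edge $w\to w'$ in the parabolic Hasse graph, which also records $l(w')=l(w)+1$. Suppose now the standard GVM homomorphism were zero. Theorem~\ref{zeromap} then produces $\alpha\in\Delta-\Sigma$ with
$$
M_{\lieb}(w'\tilde\lambda)\;\subset\;M_{\lieb}(s_\alpha w\tilde\lambda)\;\subset\;M_{\lieb}(w\tilde\lambda).
$$
Converting both inclusions back via Theorem~\ref{truevermamodulesmap} gives $w\le s_\alpha w\le w'$ in the Bruhat order, so in particular $l(s_\alpha w)=l(w)+1=l(w')$. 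Since $s_\alpha w\le w'$ and the two elements have the same length, they must coincide, so $w'=s_\alpha w$.

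To finish, I would observe that $s_\alpha\in W_{\liep}$, so $w'=s_\alpha w$ says that $w$ and $w'$ lie in the same left $W_{\liep}$-coset. The decomposition $u=u_p u^p$ recalled in the excerpt, together with the additivity $l(u)=l(u_p)+l(u^p)$, shows precisely that $W^p$ is a transversal of minimal-length representatives for $W_{\liep}\backslash W$; hence two elements of $W^p$ in the same left coset must be equal. This forces $w=w'$, contradicting $l(w')=l(w)+1$, and so the standard homomorphism cannot vanish.

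The one place that requires care is the double use of strict dominance: Theorem~\ref{truevermamodulesmap} only gives the full biconditional between Verma inclusions and the Bruhat order when $\tilde\lambda\in P_{\lieg}^{++}+\delta$, and this biconditional is used in both directions (first to encode the hypothesis, then to read the conclusion of Theorem~\ref{zeromap} as a length identity). With strict dominance in hand, the rest of the argument is a short combinatorial consequence of the minimal-representative property of $W^p$ and does not appear to require Lemma~\ref{path} at all.
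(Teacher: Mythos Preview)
Your proof is correct and follows the same route as the paper's: assume the standard map vanishes, invoke Theorem~\ref{zeromap} to get $\alpha\in\Delta-\Sigma$ with $M_{\lieb}(w'\tilde\lambda)\subset M_{\lieb}(s_\alpha w\tilde\lambda)$, and derive a length contradiction. The paper phrases the endgame as the strict chain $l(w')>l(s_\alpha w)>l(w)$, whereas you arrive at $s_\alpha w=w'$ and invoke the minimal-representative property of $W^p$; these are equivalent, and your version in fact makes explicit the step (that $s_\alpha w\notin W^p$, hence $s_\alpha w\neq w'$) that the paper's strict inequality tacitly uses. One small correction to your closing remark: the worry about needing the full biconditional is unnecessary, since you only ever use the implication ``Verma inclusion $\Rightarrow$ Bruhat comparison'', which is precisely the last clause of Theorem~\ref{truevermamodulesmap} and holds for strictly dominant $\tilde\lambda$ without any integrality assumption.
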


\begin{remark}
In \cite{Lepowski}, the theorem is formulated only for $\tilde\lambda\in P^{++}+\delta$, 
but the proof works for non-integral $\tilde\lambda$ as well. 
Note, that for non-integral (and neither $\lieg$-, nor $\liep$-dominant) $\tilde\lambda-\delta$, the weights
$w\tilde\lambda-\delta$ and $w'\tilde\lambda-\delta$ may still be $\liep$-dominant and $\liep$-integral.
\end{remark} 

\begin{proof}
Assume that the standard homomorphism is zero. It follows from 
lemma \ref{zeromap} that there exists $\alpha\in \Delta-\Sigma$ so that 
$M_{\lieb}(w'\tilde\lambda)\subset M_{\lieb}(s_\alpha w \tilde\lambda)$.
The last statement of theorem $\ref{truevermamodulesmap}$ implies that
$w'> s_\alpha w$ in the Bruhat ordering. But, because $w\tilde\lambda\in P_{\liep}^{++}+\delta$
and $\alpha\in\Delta-\Sigma$, we have $(w\tilde\lambda)(H_\alpha)\in\N$ and
it follows from $\ref{truevermamodulesmap}$ that $M_{\lieb}(s_\alpha w\tilde\lambda)\subset
M_{\lieb}(w\tilde\lambda)$ and $l(s_\alpha w)=l(w)+1$. So we have $l(w')>l(s_\alpha w)>l(w)$
which contradicts $l(w')=l(w)+1$.
\end{proof}

For any weight $\lambda$, there always exists a dominant weight $\tilde\lambda$ 
(i.e. $\tilde\lambda(H_\beta)\geq 0$ for $\beta\in\Phi^+$)
on the same orbit of the Weyl group. 
If there exists some $\beta$ so that $\tilde\lambda(H_\beta)=0$, 
we say that the generalized Verma modules $M_{\liep}(w\tilde\lambda)$ have 
{\it singular character} and the weights $w\tilde\lambda$ are called {\it singular}. 
Theorem $\ref{genlepowski}$ cannot be generalized to singular weights, because 
for singular $\tilde\lambda$, the weight $w\tilde\lambda$ doesn't determine $w$ uniquely. 
(However, there are indications that a similar theorem may be
true, if we admit non-standard GVM homomorphisms.) 
%We give it as a conjecture (unfortunately, the proof is not known yet):
%
%\begin{hypothesis}
%\label{hypo}
%Let $\tilde\lambda$ be $\lieg$-dominant, 
%$w\to w'$ in $W^p$, $M_{\lieb}(w'\tilde\lambda)\subset M_{\lieb}(w\tilde\lambda),\,\, w\tilde\lambda, 
%w'\tilde\lambda\in P_{\liep}^{++}+\delta.$
%Then there exist a (standard or non-standard) nonzero
%homomorphism $M_{\liep}(w'\tilde\lambda)\to M_{\liep}(w\tilde\lambda)$.
%\end{hypothesis}

The following lemma will be used for comparing lengths of two elements in $W^p$:

\begin{lemma}
\label{grading2length}
Let $E$ be the grading element for the pair $(\lieg, \liep)$ and let $w,w'\in W^p$, $w'=s_\gamma w$ and
$l(w')>l(w)$. Then $w\delta(E)-w'\delta(E)\in\N$. 
\end{lemma}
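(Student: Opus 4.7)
The plan is to expand $w\delta(E)-w'\delta(E)$ using the hypothesis $w'=s_\gamma w$ and the explicit formula $s_\gamma(\mu)=\mu-\mu(H_\gamma)\gamma$, which yields
\[
w\delta - w'\delta = (w\delta)(H_\gamma)\,\gamma,\qquad\hbox{so}\qquad w\delta(E)-w'\delta(E)=(w\delta)(H_\gamma)\cdot\gamma(E).
\]
Since $s_{-\gamma}=s_\gamma$ and replacing $\gamma$ by $-\gamma$ flips the sign of both factors, I may assume $\gamma\in\Phi^+$. It then suffices to prove that each of the two factors is a positive integer.

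For the first factor, Weyl-invariance of the Killing form gives $(w\delta)(H_\gamma)=\delta(H_{w^{-1}\gamma})$. The standard length criterion for (not necessarily simple) reflections in Coxeter groups states that, for $\gamma\in\Phi^+$, $l(s_\gamma w)>l(w)$ iff $w^{-1}\gamma\in\Phi^+$; the hypothesis $l(w')>l(w)$ therefore forces $w^{-1}\gamma\in\Phi^+$. Since $\delta$ is strictly $\lieg$-dominant and integral, $\delta(H_\beta)\in\N$ for every $\beta\in\Phi^+$, and thus $(w\delta)(H_\gamma)\in\N$.

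For the second factor, writing $\gamma=\sum_{\alpha\in\Delta}n_\alpha\alpha$ with $n_\alpha\in\Z_{\geq 0}$, I have $\gamma(E)=\sum_{\alpha\in\Sigma}n_\alpha$ because $\alpha(E)=1$ for $\alpha\in\Sigma$ and $\alpha(E)=0$ for $\alpha\in\Delta-\Sigma$. This is a non-negative integer, and it vanishes precisely when $\gamma$ lies in the $\Z$-span of $\Delta-\Sigma$, i.e.\ when $\gamma$ is a root of $\lieg_0$, equivalently $s_\gamma\in W_\liep$. I would rule that case out by a coset argument: if $s_\gamma\in W_\liep$, then $w$ and $w'=s_\gamma w$ represent the same right coset of $W_\liep\backslash W$, yet each such coset contains a unique element of $W^p$; since both $w$ and $w'$ lie in $W^p$, this forces $w=w'$, contradicting $l(w')>l(w)$.

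The one delicate step — and the place where the assumption that \emph{both} $w$ and $w'$ lie in $W^p$ is genuinely used — is precisely the exclusion of $\gamma\in\Phi_{\lieg_0}$; without it $\gamma(E)$ could vanish and the lemma would fail. Everything else is routine: combining the two positivity statements yields $(w\delta)(H_\gamma)\cdot\gamma(E)\in\N$, as required.
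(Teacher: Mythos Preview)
Your proof is correct and follows essentially the same approach as the paper's: both factor $w\delta(E)-w'\delta(E)=(w\delta)(H_\gamma)\cdot\gamma(E)$, argue $\gamma(E)\in\N$ by excluding $s_\gamma\in W_\liep$ via uniqueness of the $W^p$-representative in each $W_\liep$-coset, and argue $(w\delta)(H_\gamma)\in\N$ from the dominance and integrality of $\delta$ together with $l(w')>l(w)$. Your write-up is in fact a bit more explicit than the paper's about the length criterion giving $w^{-1}\gamma\in\Phi^+$ and about why one may take $\gamma\in\Phi^+$, but the argument is the same.
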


\begin{proof}
Because $w\in W^p$ and $w'=s_\gamma w\in W^p$, the uniqueness of the decomposition $W=W_p W^p$ yields $s_\gamma\notin W_p$. From the definition, $W_p=W_{\lieg_0}$, the Weyl group of $\lieg_0$, so the root $\gamma$ cannot be expressed as sum
of simple roots in $\Delta-\Sigma$. The definition of the grading $\oplus_j \lieg_j$ of $\lieg$, associated to
the pair $(\lieg, \liep)$ implies that the $\gamma$-root space generator $X_\gamma\in\lieg_i$ for some $i>0$, so $\gamma(E)=i\in\N$.
We obtain $w'\delta(E)=(s_\gamma w\delta)(E)=(w\delta-w\delta(H_\gamma) \gamma)(E)=w\delta(E)-i w\delta(H_\gamma)$.
Because $\delta$ is dominant and $l(w')>l(w)$, we have $w\delta(H_\gamma)>0$. The weight $\delta$ is also
integral, because $\delta(H_\alpha)=1$ for each $\alpha\in\Delta$. So the difference 
$(w\delta-w'\delta)(E)=i w\delta(H_\gamma)$ is a product of two positive integers.
\end{proof}

\subsection{Order of the differential operator dual to a GVM homomorphism}
The following theorem is an important tool for determining the order of an operator, dual to
a homomorphism of generalized Verma modules, if the highest weights of the inducing representations are known.

\begin{theorem}
\label{grading2degree}
Let $\mu, \lambda$ be highest weights of some irreducible finite-dimensional 
$P$-modules $\V_\mu, \, \V_\lambda$ and $\phi: M_{\liep}(\V_\mu)\to M_{\liep}(\V_\lambda)$ be a nonzero 
homomorphism of generalized Verma modules. Let $E$ be the grading element
for $(\lieg,\liep)$ and let $o:=(\lambda-\mu)(E)$. Then $o$ is an integer larger or equal to the 
order of the dual differential operator $\Gamma(G\times_P \V_\lambda^*)\to \Gamma(G\times_P \V_\mu^*)$.
Further, if $o\in\{1,2\}$, then $o$ is the order of the operator.
\end{theorem}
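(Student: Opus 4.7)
The plan is to exploit $\lieg$-equivariance of $\phi$, in particular preservation of $E$-eigenvalues, to pin down the filtration degree of $\phi(1\otimes v_\mu)$ inside $\univ(\lieg_-)\otimes\V_\lambda$; the order of the dual operator is then read off via the duality (\ref{duality}).

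I would first identify $M_{\liep}(\V_\lambda)\simeq\univ(\lieg_-)\otimes\V_\lambda$ by PBW and pick a basis of $\lieg_-$ consisting of root vectors $Y_\alpha\in\lieg_{-l_\alpha}$ with $l_\alpha\geq 1$. Since $\V_\lambda$ is an irreducible finite-dimensional $\liep$-module, $\lieg_+$ acts trivially on it and $E$, being central in $\lieg_0$, acts by the scalar $\lambda(E)$. A basis monomial $Y_{\alpha_1}\cdots Y_{\alpha_m}\otimes v$ of $M_{\liep}(\V_\lambda)$ is therefore an $E$-eigenvector with eigenvalue $\lambda(E)-\sum_j l_{\alpha_j}$. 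Because $\phi$ is $\lieg$-linear it preserves $E$-eigenvalues, and $\phi(1\otimes v_\mu)$ must have eigenvalue $\mu(E)$; comparison forces $\sum_j l_{\alpha_j}=o$ on each surviving monomial. This shows $o\in\N$, and since each $l_{\alpha_j}\geq 1$ the filtration degree satisfies $m\leq o$, so $\phi(1\otimes v_\mu)\in\univ_o(\lieg_-)\otimes\V_\lambda$. The same eigenvalue argument applied to the whole (irreducible) $\liep$-submodule $1\otimes\V_\mu$ yields $\phi(1\otimes\V_\mu)\subset\univ_o(\lieg_-)\otimes\V_\lambda$, which via (\ref{duality}) means the dual differential operator has order at most $o$. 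The case $o=1$ is then immediate: the order cannot be $0$, for otherwise $\phi(1\otimes v_\mu)$ would lie in $1\otimes\V_\lambda$ and force $\lambda(E)=\mu(E)$, contradicting $o=1$.

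The $o=2$ case is the interesting one; I need to rule out order $1$ as well. The $E$-weight constraint leaves only
$$\phi(1\otimes v_\mu)=\sum_I Z_I\otimes v_I,\qquad Z_I\in\lieg_{-2},\ v_I\in\V_\lambda.$$
Applying the singular-vector condition $X\cdot\phi(1\otimes v_\mu)=0$ for $X\in\lieg_1$, and using $Xv_I=0$, gives $\sum_I[X,Z_I]\otimes v_I=0$ in $\lieg_{-1}\otimes\V_\lambda$ for every $X\in\lieg_1$. The decisive step is the injectivity of the $\lieg_0$-equivariant map $\lieg_{-2}\to\Hom(\lieg_1,\lieg_{-1})$, $Z\mapsto\ad(Z)|_{\lieg_1}$: any $Z$ in its kernel satisfies $\kappa(Z,[\lieg_1,\lieg_1])=\kappa([Z,\lieg_1],\lieg_1)=0$ by invariance of the Killing form $\kappa$, and the standard identity $[\lieg_1,\lieg_1]=\lieg_2$ in a parabolic grading of a semisimple $\lieg$, combined with non-degeneracy of $\kappa$ on $\lieg_{-2}\times\lieg_2$, then forces $Z=0$. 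Tensoring with $\V_\lambda$ preserves injectivity, so $\sum_I Z_I\otimes v_I=0$ and hence $\phi=0$, a contradiction. The main obstacle is precisely this last step; it depends on the bracket surjectivity $[\lieg_1,\lieg_1]=\lieg_2$, and this is exactly the input that ties the argument to $o\leq 2$.
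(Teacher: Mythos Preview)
Your argument is correct and tracks the paper's proof closely: the same $E$-eigenvalue bookkeeping gives $o\geq$ order, the $o=1$ case is identical, and for $o=2$ both you and the paper reduce (under the assumption order $\leq 1$) to $\phi(1\otimes v_\mu)\in\lieg_{-2}\otimes\V_\lambda$ and then annihilate it by testing against $\lieg_1$. The only real difference is the endgame. The paper first passes to a basis $\{\tilde v_j\}$ of $\V_\lambda$, so that $\sum_j[x,\tilde y_j]\otimes\tilde v_j=0$ forces each $[x,\tilde y_j]=0$; it then uses that $\lieg_1$ generates $\liep^+$ (Jacobi) and finally exhibits an explicit $x=\sum a_i x_{\phi_i}\in\lieg_k$ with $[x,\tilde y_j]\neq 0$. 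Your route via the Killing form and $[\lieg_1,\lieg_1]=\lieg_2$ is a cleaner packaging of the same non-degeneracy fact and avoids the basis choice; the paper's version, on the other hand, is phrased for arbitrary $\lieg_{-k}$ and in effect proves the slightly stronger statement ``order $1$ implies $o=1$'' in any $|k|$-grading. One small point of presentation: make explicit that the expression $\sum_I Z_I\otimes v_I$ with $Z_I\in\lieg_{-2}$ is obtained \emph{after} assuming order $\leq 1$; the $E$-constraint alone would also allow quadratic $\lieg_{-1}$-monomials.
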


\begin{proof}
Let $v_\mu$ be the highest weight vector of $\V_\mu$ and 
$\phi(1\otimes v_\mu)=\sum_j y_j\otimes v_j$, $y_j\in\univ(\lieg_-)$, $v_j\in\V_\lambda$ ($M_{\liep}(\lambda)\simeq\univ(\lieg_-)\otimes\V_\lambda$ as vector space).
Let $k$ be the maximal integer so that $y_i\in\univ_k (\lieg_-)-\univ_{k-1}(\lieg_-)$ for some $y_i$ and let $0\neq g_0\in\univ(\lieg_0)$.
Then
$\phi$ maps $1\otimes g_0\cdot v_\mu=g_0\otimes_{\univ(\liep)}v_\mu$ to 
$$\sum_j g_0 y_j\otimes_{\univ(\liep)} v_j=\sum_j (y_j g_0 +[g_0,y_j])\otimes_{\univ(\liep)}v_j=
\sum_j (y_j\otimes g_0\cdot v_j + [g_0,y_j]\otimes v_j)$$
because for $g_0\in \univ(\lieg_0)$ and $y_j\in\univ(\lieg_-)$, $[g_0,y_j]\in\univ(\lieg_-)$ ($[a,b]=ab-ba$ is the
commutator in the associative algebra $\univ(\lieg)$). Simple commutation relations show that, if
$y_j\in\univ_{l}(\lieg_-)-\univ_{l-1}(\lieg_-)$, then  $[g_0, y_j]\in\univ_{l}(\lieg_-)-\univ_{l-1}(\lieg_-)$ as well. 
Therefore, $\phi$ maps 
$1\otimes g_0\cdot v_\mu$ into $\univ_{k}(\lieg_-)\otimes \V_\lambda$ but not to $\univ_{k-1}(\lieg_-)\otimes \V_\lambda$. 
$\V_\mu$ is an irreducible $\liep$-module and $\lieg_0$ is the reductive part of $\liep$, so 
so $\univ(\lieg_0)v_\mu=\V_\mu$ and $\phi$ maps $1\otimes \V_\mu$ into $\univ_{k}(\lieg_-)\otimes \V_\lambda$.
Let $v\in\V_\mu$, 
$\phi(1\otimes v)=\sum_j \tilde{y}_j\otimes \tilde{v}_j$,  $\tilde{v}_j\in\V_\lambda$, $\tilde{y}_j\in\univ_{k}(\lieg_-)$ and 
$\tilde{y}_i\notin \univ_{k-1}(\lieg_-)$ for some $i$. 
Let $\tilde{y}_j=y_1^{(j)}\ldots y_{l(j)}^{(j)}$
for some $y_u^{(j)}\in\lieg_-$, $l(j)\leq k$ and $l(i)=k$.

Applying the duality (\ref{duality}),
the differential operator $D$ satisfies
$$v((Df)(0))=\sum_j \tilde{v}_j(L_{y_1^{(j)}} \ldots L_{y_{l(j)}^{(j)}} (f)(0)),$$ where $L_{y_u^{(j)}}$ are the left
invariant vector fields generated by $y_u^{(j)}\in\lieg_-$. So, the operator $D$
dual to the homomorphism is of order $k$.

Let us suppose that the operator has order $k$, i.e. $\phi$ maps $1\otimes v_\mu$ into 
$\univ_k(\lieg_-)\otimes \V_\lambda$ but not into $\univ_{k-1}(\lieg_-)\otimes \V_\lambda$.
Let $\{y_1,\ldots y_n\}$ be an ordered basis of $\lieg_-$ that consists of generators of 
negative root spaces in $\lieg_-$.

Let $\phi(1\otimes v_\mu)=\sum_j \tilde{y}_j\otimes v_j$ and assume that all the $v_j$'s are 
weight vectors in $\V_\lambda$ and $\tilde{y}_j$ is a product of the $y_j$'s
(it follows from the PBW theorem that such expression is always possible). Then all $\tilde{y}_j\otimes v_j$ are weight vectors
and, because their sum is a weight vector of weight $\mu$, each $\tilde{y}_j\otimes v_j$ is
a weight vector of weight $\mu$ as well.

Because $\phi(1\otimes v_\mu)\notin \univ_{k-1}(\lieg_-)\otimes \V_\lambda$, there exists $i$ such that 
$\tilde{y}_i=y_{i_1}\ldots y_{i_k}$ is a product of $k$ elements. Let $u_j\in\N$ be defined by 
$y_{i_j}\in\lieg_{-u_j}$.
The action of the grading element on $y_{i_1}\ldots y_{i_k}\otimes v_i$ is
\begin{eqnarray*}
&&\mmmm E\cdot (y_{i_1}\ldots y_{i_k}\otimes v_i)=
Ey_{i_1}\ldots y_{i_k}\otimes_{\univ(\liep)}v_i=\\
&&\mmmm=(y_{i_1}E+[E,y_{i_1}])y_{i_2}\ldots y_{i_k}\otimes_{\univ(\liep)}v_i=\ldots=\\
&&\mmmm=y_{i_1}\ldots y_{i_k} (\lambda(E)-u_1-\ldots -u_k)\otimes v_i
\end{eqnarray*}
But $y_{i_1}\ldots y_{i_k}\otimes v_i$ is a weight vector of weight $\mu$, so the 
left hand side equals $\mu(E)(y_{i_1}\ldots y_{i_k}\otimes v_i)$. It follows
\begin{equation}
\label{lambda-mu}
(\lambda-\mu)(E)=\sum_j u_j\geq k
\end{equation}
because $u_j\geq 1$ for all $j$.
So, we see that $(\lambda-\mu)(E)$ is always an integer larger or 
equal to the order of the operator.

It follows immediately that $(\lambda-\mu)(E)=1$ implies that the operator is of first
order. To finish the proof, it remains to show that for a first order operator,
$(\lambda-\mu)(E)$ is $1$.

Assume that $D$ is an operator of first order. This means that 
$\phi(1\otimes v_\mu)=\sum_j y_j\otimes v_j$ for $y_j\in\univ_1(\lieg_-)$ and again,
assume that $y_j$ are either constants or generators of negative root spaces and $v_i$
are weight vectors. All the terms $y_j\otimes v_j$ are of weight $\mu$, and therefore, 
$$\mu(E)(y_j\otimes v_j)=E (y_j\otimes v_j)=(\lambda(E)+[E,y_j])(y_j\otimes v_j)$$
so $[E,y_j]=(\mu-\lambda)(E)$ for all $j$ and it follows that all the $y_j$'s are from the
same graded components of $\lieg$. If $y_j\in\lieg_{-1}$, so $(\lambda-\mu)(E)=1$ and we are done.
Assume, for contradiction, that $y_j\in\lieg_{-k}$ for $k>1$.

Because $\sum_j y_j\otimes v_j\in\lieg_{-k}\otimes \V_\lambda$, choosing a basis $\{\tilde{v}_1,\ldots, \tilde{v}_m\}$ of
$\V_\lambda$, $\sum_j y_j\otimes v_j$ can be uniquely expressed as 
$\sum_{j=1}^m \tilde{y}_j\otimes \tilde{v}_j$ for some $\tilde{y}_j\in\lieg_{-k}$.
Because it is 
a homomorphic image of a highest weight vector in $M_{\liep}(\mu)$, 
it must be annihilated by all positive root spaces in $\lieg$, in particular, by any generator $x$ of a root space
in $\lieg_1$:
\begin{eqnarray*}
&&\mmmm x\cdot (\sum_j \tilde{y}_j\otimes \tilde{v}_j)=\sum_j x \tilde{y}_j\otimes_{\univ(\liep)}\tilde{v}_j=\sum_j (\tilde{y}_jx+[x,\tilde{y}_j])
\otimes_{\univ(\liep)}\tilde{v}_j=\\
&&\mmmm =\sum_j (\tilde{y}_j\otimes_{\univ(\liep)}x\cdot \tilde{v}_j+[x,\tilde{y}_j]\otimes_{\univ(\liep)}\tilde{v}_j)=
\sum_j [x,\tilde{y}_j]\otimes \tilde{v}_j=0
\end{eqnarray*}
because $[x,\tilde{y}_j]\in \lieg_{-k+1}\subset\lieg_-$ and $x\cdot \tilde{v}_\lambda=0$.
Because $\tilde{v}_j$ forms a basis of $\V_\mu$, it follows that for each $j$, $[x,\tilde{y}_j]=0$
for all $x\in\lieg_1$. The grading fulfills that $\lieg_{-1}$
generates $\lieg_-$ and $\lieg_{1}$ generates 
$\liep^+=\sum_{i\geq 1} \lieg_{i}$. The Jacobi identity implies that
if $\tilde{y}_j$ commutes with $\lieg_1$, it commutes with all the $\liep^+$
as well. Let $\tilde{y}_j=\sum_i a_i y_{-\phi_i}$ where $y_{-\phi_i}$ is a generator
of the $-\phi_i$-root space. Define $x:=\sum_i a_i x_{\phi_i}$, where 
$x_{\phi_i}$ is a generator of the $\phi$-root space. We see that 
$x\in\lieg_{k}$ and $[x, \tilde{y}_j]=\sum_i a_i^2 [x_\phi, y_{-\phi}]\neq 0$ and we have a contradiction.
\end{proof}

\section{The orbits associated with the Dirac operator}
\subsection{Existence of the homomorphisms}

Let as suppose that $n$ is odd, $\lieg=B_{k+(n-1)/2}=\so(n+2k,\C)$, $\liep$ its parabolic subalgebra
corresponding to $$\dynkin\whiteroot{}\link\ldots\link\whiteroot{}\link\noroot{}\link\whiteroot{}\link\ldots\link\whiteroot{}\llink>\whiteroot{}\enddynkin$$
where the $k$-th node is crossed ($\Sigma=\{\alpha_k\}$).
The subalgebra $\liep$ induces the $2$-gradation
$
%\begin{equation}
\label{2grading}
\lieg=
\left(
\begin{tabular}{c|c|c}
$\lieg_0$ & $\lieg_{1}$ & $\lieg_{2}$ \\
\hline
$\lieg_{-1}$ & $\lieg_0$ & $\lieg_{1}$ \\
\hline
$\lieg_{-2}$ & $\lieg_{-1}$ & $\lieg_0$
\end{tabular}
\right),
%\end{equation}
$
where $\lieg_0$ consists of blocks of dimension $k\times k$, $n\times n$ and $k\times k$.
The corresponding grading element is $E=\diag(1,\ldots,1,0,\ldots,0,-1\ldots,-1)$ 
and the action of a weight 
$[a_1,\ldots,a_k|b_1,\ldots, b_{(n-1)/2}]$ on E is $\sum_i a_i$.

In this section, we will try to describe the structure of GVM homomorphisms on the Weyl 
orbit of the weight
\begin{eqnarray*}
&&\mmmm \lambda=\dynkin\whiteroot{0}\link\ldots\link\whiteroot{0}\link\noroot{-\frac{n}{2}}\link
\whiteroot{0}\link\ldots\link\whiteroot{0}\llink>\whiteroot{1}\enddynkin +\delta.
\end{eqnarray*}
It was shown in \cite{srni, Tampere} that there exists a GVM homomorphism 
$M_{\liep}(\mu)\to M_{\liep}(\lambda)$ so that
the dual differential operator may be identified with the 
Dirac operator in various Clifford variables, as noticed in the introduction 
(choosing the real Lie groups $G=\Spin(n+k,k)$ and $P$ the parabolic subgroup so that its 
complexified Lie algebra is $\liep$).

Let as represent the elements of $\lieg$ as matrices antisymmetric with respect to the anti-diagonal, 
choose the Cartan algebra to be the algebra of diagonal matrices in 
$\lieg$ and a natural basis $\{\epsilon_i\}$
of $\lieh^*$ defined by $$\epsilon_i(\diag(a_1,\ldots, a_{k+(n-1)/2},0, -a_{k+(n-1)/2},\ldots, -a_1)):=a_i$$
(see e.g. \cite{GW} for details).

In the $\epsilon_i$-basis, $\delta=[\ldots,5/2,3/2,1/2],$ $\lieg$-dominant weights are those 
$[a_1,\ldots,a_{k+(n-1)/2}]$ such that $a_1\geq a_2\geq \ldots \geq a_{k+(n-1)/2}\geq 0$
and $\liep$-dominant weights must fulfill $a_1 \geq a_2\geq\ldots\geq a_k$
and $a_{k+1}\geq \ldots\geq a_{k+(n-1)/2}\geq 0$. 
A weight is $\liep$-dominant and $\liep$-integral, if, moreover, $a_i-a_j\in\Z$ for $i,j\leq k$ and 
$a_l\in \Z/2$ for $l>k$. Positive roots are all $[0,\ldots,0, 1,0,\ldots, -1,\ldots]$, $[\ldots,1,\ldots,1,\ldots]$ and $[\ldots, 0,1,0,\ldots]$.
The corresponding root reflections map the weight $[\ldots, a_i, \ldots, a_j,\ldots]$ to $[\ldots, a_j, \ldots, a_i, \ldots]$ (transpositions), 
or to $[\ldots, -a_j,\ldots, -a_i, \ldots ]$ (sign-transpositions) or to $[\ldots, -a_i,\ldots, a_j,\ldots ]$ (sign-change).

The weight $\lambda$ we consider can be writen in the $\epsilon_i$-basis as 
$$\lambda=[(2k-1)/2,\ldots,3/2 ,1/2|\ldots,3,2,1].$$

\begin{lemma}
\label{2diraksodd}
Let $k=2$. Then there exist three nonzero weights $\mu, \nu, \xi\in P_{\liep}^{++}$ on the orbit of $\lambda$ and 
nonzero standard homomorphisms 
$$M_{\liep}(\xi)\to M_{\liep}(\nu)\to M_{\liep}(\mu)\to M_{\liep}(\lambda),$$ where the weights are 
described by the following diagram: 

\centerline{%\begin{center}
\includegraphics{bgg.14}
}
\end{lemma}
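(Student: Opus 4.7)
My plan is to exhibit three $\liep$-dominant weights on the Weyl orbit of $\lambda$, establish the true-Verma-module homomorphisms by a single-reflection check, and lift to nonzero standard GVM homomorphisms using Theorem~\ref{genlepowski} for the outer (order-one) steps and Theorem~\ref{zeromap} for the middle (order-two) step.

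The candidate weights, written in the $\epsilon_i$-basis, are $\mu := [3/2,-1/2 \,|\, (n-1)/2,\ldots,1]$, $\nu := [1/2,-3/2 \,|\, (n-1)/2,\ldots,1]$ and $\xi := [-1/2,-3/2 \,|\, (n-1)/2,\ldots,1]$. All three differ from $\lambda$ only by sign changes of the first two coordinates and so lie in $W\cdot\lambda$, and each satisfies the dominance/integrality conditions to belong to $P_{\liep}^{++}+\delta$. Setting $\beta_1:=\epsilon_2$, $\beta_2:=\epsilon_1+\epsilon_2$, $\beta_3:=\epsilon_1$, direct computation gives $\mu=s_{\beta_1}\lambda$, $\nu=s_{\beta_2}\mu$, $\xi=s_{\beta_3}\nu$ and $\lambda(H_{\beta_1})=\mu(H_{\beta_2})=\nu(H_{\beta_3})=1\in\N$, so Theorem~\ref{truevermamodulesmap} supplies nonzero Verma-module homomorphisms $M_{\lieb}(\mu)\to M_{\lieb}(\lambda)$, $M_{\lieb}(\nu)\to M_{\lieb}(\mu)$ and $M_{\lieb}(\xi)\to M_{\lieb}(\nu)$.

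To pass to the GVMs I fix the strictly dominant representative $\tilde\lambda:=[(n-1)/2,(n-3)/2,\ldots,2,3/2,1,1/2]$ on the orbit; since $\tilde\lambda$ has trivial stabilizer, each $\lambda_i\in\{\lambda,\mu,\nu,\xi\}$ is written uniquely as $w_i\tilde\lambda$ for $w_i\in W$, and strict $\liep$-dominance places $w_i$ in $W^p$. For the outer steps the slot-$2$ entry of $\lambda$ (respectively the slot-$1$ entry of $\nu$) equals $\tilde\lambda_m=1/2$, so $w_\lambda^{-1}(\epsilon_2)=\epsilon_m=\alpha_m$ and $w_\nu^{-1}(\epsilon_1)=\alpha_m$; thus $w_\mu=w_\lambda s_{\alpha_m}$ and $w_\xi=w_\nu s_{\alpha_m}$ are adjacent simple-root covers, so $w_i\to w_{i+1}$ is a single edge of the parabolic Hasse graph and Theorem~\ref{genlepowski} yields nonzero standard homomorphisms $M_{\liep}(\mu)\to M_{\liep}(\lambda)$ and $M_{\liep}(\xi)\to M_{\liep}(\nu)$.

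The middle step is the technical heart: $w_\mu^{-1}(\epsilon_1+\epsilon_2)=\epsilon_{(n-1)/2}-\epsilon_m$ is not a simple root and a direct length count shows $l(w_\nu)=l(w_\mu)+3$, so $\mu\to\nu$ is not a single Hasse cover and Theorem~\ref{genlepowski} does not apply. Instead I invoke Theorem~\ref{zeromap}: it suffices to verify $w_{s_\alpha\mu}\not\leq w_\nu$ in Bruhat for every $\alpha\in\Delta-\Sigma$. For $\alpha=\alpha_1$ the difference $s_{\alpha_1}\mu-\nu=-\epsilon_1+3\epsilon_2$ has negative first coordinate, which cannot be realized as any sum of positive roots with positive integer coefficients and precludes the required BGG chain; for $\alpha=\alpha_m$ the weight $s_{\alpha_m}\mu$ has a negative last coordinate and the same obstruction applies; for the intermediate $\alpha_j$, $3\leq j\leq m-1$, an elementary inversion-set comparison $\mathrm{Inv}(w_{s_{\alpha_j}\mu})\not\subseteq\mathrm{Inv}(w_\nu)$ completes the verification. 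This last case is the main obstacle, but the check is uniform in $n$ and gives the nonzero standard homomorphism $M_{\liep}(\nu)\to M_{\liep}(\mu)$, completing the chain $M_{\liep}(\xi)\to M_{\liep}(\nu)\to M_{\liep}(\mu)\to M_{\liep}(\lambda)$.
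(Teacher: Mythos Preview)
Your overall strategy matches the paper's: exhibit the four weights, get true Verma homomorphisms from Theorem~\ref{truevermamodulesmap}, handle the two outer arrows via Theorem~\ref{genlepowski}, and attack the middle arrow via Theorem~\ref{zeromap}. The outer steps are fine (your right-multiplication argument $w_\mu=w_\lambda s_{\alpha_m}$ is in fact a little cleaner than the paper's grading-element computation). The middle step, however, has two genuine gaps.

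First, for $\alpha=\alpha_m$ your claimed obstruction is false. One has
\[
s_{\alpha_m}\mu-\nu=[1,1\,|\,0,\ldots,0,-2]=(\epsilon_1-\epsilon_m)+(\epsilon_2-\epsilon_m),
\]
which \emph{is} a sum of positive roots; a negative last coordinate is no obstacle in type $B$ since $\epsilon_i-\epsilon_m\in\Phi^+$. So ``the same obstruction applies'' does not go through, and you have not excluded $M_{\lieb}(\nu)\subset M_{\lieb}(s_{\alpha_m}\mu)$.

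Second, for the intermediate simple roots you invoke $\mathrm{Inv}(s_{\alpha_j}w_\mu)\not\subseteq\mathrm{Inv}(w_\nu)$. But inversion-set containment characterizes the \emph{weak} order, not the Bruhat order: already in $A_2$ one has $\mathrm{Inv}(s_1)=\{\alpha_1\}\not\subseteq\{\alpha_2,\alpha_1+\alpha_2\}=\mathrm{Inv}(s_2s_1)$ while $s_1<s_2s_1$ in Bruhat. So this criterion does not rule out $s_{\alpha_j}w_\mu\leq w_\nu$, hence does not rule out the Verma inclusion.

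The paper closes these cases differently. It first bounds $l(w_\nu)-l(w_\mu)\in\{1,3\}$ via Lemma~\ref{grading2length} (your asserted ``direct length count'' giving exactly $3$ is not justified; the paper treats both possibilities). If the difference is $1$, Theorem~\ref{genlepowski} applies. If it is $3$, then any putative inclusion $M_{\lieb}(\nu)\subset M_{\lieb}(s_\alpha\mu)$ must factor through exactly one intermediate weight, i.e.\ $\nu=s_{\beta_1}s_{\beta_2}s_\alpha\mu$ with each step a BGG link. The integrality condition $s_\alpha\mu(H_{\beta_2}),\,s_{\beta_2}s_\alpha\mu(H_{\beta_1})\in\N$ forbids any $\beta_i$ from mixing the half-integer block (first two coordinates) with the integer block; this forces one of $\beta_1,\beta_2$ to equal $\alpha$, contradicting $M_{\lieb}(s_\alpha\mu)\subsetneq M_{\lieb}(\mu)$ (resp.\ $M_{\lieb}(s_\alpha\nu)\subsetneq M_{\lieb}(\nu)$). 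That finite chain analysis is what you are missing.
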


\begin{proof}
The existence of true Verma module homomorphisms $M_{\lieb}(\xi)\to\ldots\to M_{\lieb}(\lambda)$ 
follows easily from Theorem $\ref{truevermamodulesmap}$. All the weights are from $P_{\liep}^{++}+\delta$
and they are on the orbit of the $\lieg$-dominant weight $\tilde\lambda=[\ldots,4,3,2,3/2,1,1/2]$. This weight is 
nonsingular, because its coefficients are strictly decreasing and the last one is strictly positive. 

Let $w$ resp. $w', w'', w'''$ be the elements of $W$ that takes $\tilde\lambda$ to $\lambda$ resp. $\mu, \nu, \xi$.
Easy calculation shows that $w$ can be characterized by $w\delta=[5/2, 1/2| \ldots, $ $9/2,7/2,3/2]$ 
and $w'\delta=[5/2, -1/2|\ldots,9/2,7/2,3/2]$. Because $w'$ and $w$ are connected by a root reflection,
lemma $\ref{path}$ states that either $w\leq w'$ ot $w'\leq w$ in the Bruhat ordering and there exists a sequence
$w=w_0\to w_1\to \ldots \to w_{n-1}\to w_n=w'$, $w_i\in W^p$. Lemma $\ref{grading2length}$ states $(w_i\delta-w_{i+1}\delta)(E)\in\N$ for all $i$, where $E$ is the grading element. But we compute 
$(w\delta-w'\delta)(E)=(5/2+1/2)-(5/2-1/2)=1$, so the only possibility is $n=1$ and $w\to w'$. Applying $\ref{genlepowski}$, we see that the standard map \gvmhom is nonzero.

The element $w''$ takes $\delta$ to $[1/2,-5/2|\ldots, 9/2,7/2,3/2]$ and $(w''\delta-w'\delta)(E)=(5/2-1/2)-(1/2-5/2)=4$. The lenth difference $l(w'')-l(w')$ must be odd, because $w''=s_\gamma w'$ for $\gamma=[1,1|0,\ldots,0]$, and a root reflection has negative determinant. So either $w'\to w''$, or $w'\to w_1\to w_2\to w''$. In the first case, we apply theorem $\ref{genlepowski}$ as before. Suppose $w'\to w_1\to w_2\to w''$ and suppose, for contradiction, that the standard homomorpism $M_{\liep}(\nu)\to M_{\liep}(\mu)$ is zero. Theorem \ref{zeromap} says that the true Verma modules
\begin{equation}
\label{condition}
M_{\lieb}(\nu)\subset M_{\lieb}(s_\alpha\mu)
\end{equation}
for some simple root $\alpha\neq \alpha_2$. 
We know that for such $\alpha$, $s_\alpha\in W_p$ and, because $\mu$ is $\liep$-dominant, $M_{\lieb}(s_\alpha\mu)\subsetneq M_{\lieb}	(\mu)$.
The weight $s_\alpha\mu$ is one of the following types:
\begin{enumerate}
\item{$[-1/2,3/2|\ldots,3,2,1]$ if $\alpha=\alpha_1$}
\item{$[3/2,-1/2|(n-1)/2,\ldots,l-1,l,\ldots,2,1]$ if $\alpha=\alpha_i$, $2<i<k+(n-1)/2$} 
\item{$[3/2,-1/2|\ldots,3,2,-1]$ if $\alpha=\alpha_{k+(n-1)/2}$} 
\end{enumerate}
First we show that $\alpha\neq\alpha_1$. If $\alpha=\alpha_1$, (\ref{condition}) implies 
that $s_{\alpha_1}\mu-\nu=[-1,3|0,\ldots,0]$ is a sum of positive roots, but this is not possible, as no positive root
is of the form $[-1,\hbox{something}]$.

Now assume that $s_\alpha \mu$ is of type $(2)$. Because
$$
M_{\lieb}(w''\tilde\lambda)=M_{\lieb}(\nu)\subsetneq M_{\lieb}(s_\alpha\mu)\subsetneq M_{\lieb}(\mu)=M_{\lieb}(w'\tilde\lambda),
$$
$l(w')-l(w)=3$ and $\nu$ is not connected with $s_\alpha\mu$ by any root reflection, it follows from
Theorem \ref{truevermamodulesmap} that there must be $\beta_1, \beta_2$ so that
\begin{equation}
\label{vermasubsets}
M_{\lieb}(\nu)\subsetneq M_{\lieb}(s_{\beta_1}\nu)=M_{\lieb}(s_{\beta_2} s_\alpha\mu)\subsetneq M_{\lieb}(s_\alpha\mu).
\end{equation}

Note, that the weights are $s_\alpha \mu=[3/2,-1/2|\ldots,l-1,l,\ldots,2,1]$ and 
$\nu=s_{\beta_1} s_{\beta_2} s_\alpha\mu=[1/2,-3/2|\ldots,2,1]$. In coordinates, $s_{\beta_j}$ cannot be a (sign)-transposition
interchanging an integer and a half-integer, because of the conditions $s_\alpha \mu(H_{\beta_2})\in\N$ and 
$s_{\beta_2}s_\alpha\mu(H_{\beta_1})\in\N$.
So, exactly one of these reflections
interchanges $(3/2,-1/2)$ to $(1/2,-3/2)$ and the other one interchanges $(l-1,l)$ to 
$(l,l-1)$. So either $s_{\beta_2}s_\alpha \mu=[1/2,-3/2|\ldots,l-1,l\ldots]$ or
$s_{\beta_2}s_\alpha \mu=[3/2,-1/2|\ldots,l,l-1,\ldots]$. In the first case, 
$M_{\lieb}(s_{\beta_2} s_\alpha\mu)=M_{\lieb}(s_{\alpha}\nu)\subsetneq M_{\lieb}(\nu)$ ($\nu$ is $\liep$-dominant) which contradicts $(\ref{vermasubsets})$. 
In the second case, $M_{\lieb}(s_{\beta_2}s_\alpha \mu)=M_{\lieb}(\mu)\subsetneq M_{\lieb}(s_\alpha\mu)$ 
by $(\ref{vermasubsets})$, which contradicts the fact that $M_{\lieb}(s_\alpha\mu)\subsetneq M_{\lieb}(\mu)$. 
So $s_{\alpha}\mu$ cannot be of type $(2)$.

Similarly, we can show that $s_\alpha (\mu)$ cannot be of type $(3)$. But this means that $(\ref{condition})$
does not hold and the standard map $M_{\liep}(\nu)\to M_{\liep}(\mu)$ is nonzero.

Finally, note that $w'''\delta=[-1/2,-5/2|\ldots]$, so $(w''\delta-w''\delta)(E)=(1/2-5/2)-(-1/2-5/2)=1$, therefore $w''\to w'''$ and the standard homomorphism $M_{\liep}(\xi)\to M_{\liep}(\nu)$ is nonzero.
\end{proof}

If $n\neq 5$, there are no other weights from $P_{\liep}^{++}+\delta$ on the orbit of $\tilde\lambda$. 
In case $n=5$,
there are other weights $[2,1|3/2,1/2]$, $[2,-1|3/2,1/2]$, $[1,-2|3/2,1/2]$  and $[-1,-2|3/2,1/2]$ on this orbit, 
but there is no nonzero homomorphism from the GVM's in the last theorem to any of these and vice versa.

\begin{theorem}
\label{2diroddcomplex}
The sequence of homomorphisms $M_{\liep}(\xi)\to M_{\liep}(\nu)\to M_{\liep}(\mu)\to M_{\liep}(\lambda)$ is a complex.
\end{theorem}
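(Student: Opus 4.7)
The plan is to apply Theorem~\ref{zeromap} separately to the two composable pairs in the sequence. Each standard GVM homomorphism is, by definition, a factor of a true Verma module homomorphism, and by Theorem~\ref{truevermamodulesmap} such Verma maps are injective (and unique up to scalar). Consequently the composition of two consecutive standard GVM homomorphisms in our chain is the standard GVM homomorphism that factors the composed Verma inclusion. To prove that a composition $M_{\liep}(\tau)\to M_{\liep}(\sigma)$ vanishes it therefore suffices to exhibit an $\alpha\in\Delta-\Sigma$ such that $M_{\lieb}(\tau)\subset M_{\lieb}(s_\alpha\sigma)$.

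The first step is to write the four weights in the $\epsilon_i$-basis. Applying the $w,w',w'',w'''$ from the proof of Lemma~\ref{2diraksodd} to $\tilde\lambda=[\ldots,3,2,3/2,1,1/2]$ gives
\[
\lambda=[3/2,1/2\mid\ldots,2,1],\ \mu=[3/2,-1/2\mid\ldots,2,1],\ \nu=[1/2,-3/2\mid\ldots,2,1],\ \xi=[-1/2,-3/2\mid\ldots,2,1];
\]
the second block of coordinates is the same for all four weights. For both compositions I would try the same candidate $\alpha=\alpha_1=\epsilon_1-\epsilon_2$, which lies in $\Delta-\Sigma=\{\alpha_1,\alpha_3,\ldots,\alpha_{k+(n-1)/2}\}$ because $k=2$ and $\Sigma=\{\alpha_2\}$.

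Applying $s_{\alpha_1}$ gives $s_{\alpha_1}\lambda=[1/2,3/2\mid\ldots,2,1]$ and $s_{\alpha_1}\mu=[-1/2,3/2\mid\ldots,2,1]$. Since $\lambda(H_{\alpha_1})=1$ and $\mu(H_{\alpha_1})=2$ are positive integers, Theorem~\ref{truevermamodulesmap} immediately gives the one-step Verma inclusions $M_{\lieb}(s_{\alpha_1}\lambda)\subset M_{\lieb}(\lambda)$ and $M_{\lieb}(s_{\alpha_1}\mu)\subset M_{\lieb}(\mu)$. The key numerical observation is that $s_{\alpha_1}\lambda-\nu=s_{\alpha_1}\mu-\xi=3\epsilon_2$. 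Since $\epsilon_2$ is a positive (short) root of $B_{k+(n-1)/2}$ with $H_{\epsilon_2}=2\epsilon_2$, we have $(s_{\alpha_1}\lambda)(H_{\epsilon_2})=(s_{\alpha_1}\mu)(H_{\epsilon_2})=3\in\N$, and a second application of Theorem~\ref{truevermamodulesmap} (the one-step reflection $s_{\epsilon_2}$) delivers
\[
M_{\lieb}(\nu)\subset M_{\lieb}(s_{\alpha_1}\lambda),\qquad M_{\lieb}(\xi)\subset M_{\lieb}(s_{\alpha_1}\mu).
\]
By Theorem~\ref{zeromap}, both standard GVM compositions vanish, so the sequence is a complex.

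The argument is almost entirely a weight computation; the only point requiring vigilance is that $\alpha_1$ genuinely lies outside $\Sigma$ (so that Theorem~\ref{zeromap} applies), and the pleasant accident that the same simple reflection $s_{\alpha_1}$ together with the same short-root reflection $s_{\epsilon_2}$ handles both compositions is what keeps the proof short. I do not expect a serious obstacle beyond this bookkeeping.
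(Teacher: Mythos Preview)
Your argument is correct and follows exactly the paper's approach: both compositions are shown to vanish by applying Theorem~\ref{zeromap} with the simple root $\alpha_1\in\Delta-\Sigma$, using the Verma inclusion $M_{\lieb}(\nu)\subset M_{\lieb}(s_{\alpha_1}\lambda)$ (and its analogue for $\xi,\mu$). Your write-up is simply more explicit than the paper's terse version---you spell out that the composite of standard maps is again standard, and you identify the intermediate reflection $s_{\epsilon_2}$ realizing the Verma inclusion---but the underlying idea is identical.
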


\begin{proof}
We want to show that the standard homomorphism $M_{\liep}(\nu)\to M_{\liep}(\lambda)$ is zero. This can be using theorem $\ref{zeromap}$ and the facts that 
$$M_{\lieb}([\half,-\frac{3}{2}|\ldots, 2,1])\subset M_{\lieb} ([\half,\frac{3}{2}|\ldots,2,1])=M_{\lieb}(s_{\alpha_1} [\frac{3}{2}, \half|\ldots,2,1]).$$
Similarly, we could show that $M_{\liep}(\xi)\to M_{\liep}(\mu)$ is zero.
\end{proof}

\begin{definition}
\label{S_k}
Let as define an oriented graph $S_k$ in the following way: $S_1$ has $2$ vertices connected by an arrow 
($\bullet\to\bullet$), $S_2$ contains $4$ vertices connected linearly by arrows $(\bullet\to\bullet\to\bullet\to\bullet)$. 
For $k\geq 3$, $S_k$ contains $2$ disjoint subsets 
$S^1$ and $S^2$ of vertices so that the subgraphs $S^1$and $S^2$ are both isomorphic to $S_{k-1}$, where $S^1$ contains the ``first'' vertex and $S^2$ the ``last'' one.  
Similarly, $S^1$ contains $2$ copies of $S_{k-2}$, denote them by $S^{1,1}$ and $S^{1,2}$ and $S^2$ contains $2$ copies of $S_{k-2}$, denote them by $S^{2,1}$ and $S^{2,2}$. Let $\phi$ resp. $\psi$ be the
isomorphism $S_{k-2}\to S^{1,2}$  resp. $S_{k-2}\to S^{2,1}$. Then for each vertex $x\in S_{k-2}$ there is an arrow $\phi(x)\to\psi(x)$ in $S_k$. For completeness, define $S_0$ to be a one-point graph.

Graphically,  $S_k$ has the following structure:

\centerline{
\includegraphics{singorb.6}
}
%\end{center}

\end{definition}

We draw the graphs $S_k$ for $k=3, 4$:

\centerline{
\includegraphics{bgg.17}
\hspace{2cm}
\includegraphics{bgg.18}
}

\begin{theorem}
\label{kdirgvmoddorb}
Let $(\lieg,\liep)$ and $\lambda$ be like at the beginning of this section and let $k\neq (n-1)/2$.
There are $2^k$ weights from $(P_{\liep}^{++}+\delta)\cap W\lambda$ and they can be assigned to the vertices of the graph $S_k$ so that for each arrow $\mu\to\nu$ in this graph there exists a nonzero standard homomorphism 
$M_{\liep}(\nu)\to M_{\liep}(\mu)$ and each nonzero standard homomorphism between GVM's with highest weights from 
$((P_{\liep}^{++}+\delta)\cap W\lambda)-\delta$
is a composition of these.
The weight $\lambda$ itself is assigned to the minimal vertex in $S_k$.
\end{theorem}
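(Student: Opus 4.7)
The plan is to proceed by induction on $k$, using the recursive definition of $S_k$ together with the base cases $k=1$ (the Dirac operator) and $k=2$ (Lemma $\ref{2diraksodd}$). First, I enumerate the weights. $\liep$-integrality requires the first $k$ and the last $(n-1)/2$ coordinates to each be of a single type (integer or half-integer); since $\tilde\lambda$ contains exactly $k$ half-integers and $(n-1)/2$ integers and $k\ne(n-1)/2$, the first block must consist of the $k$ half-integers (the alternative would force a mixed second block, or would require more than $k$ half-integers). $\liep$-dominance then pins the second block to $[(n-1)/2,\ldots,2,1]$, while the first block is a signed permutation of $\{1/2,3/2,\ldots,(2k-1)/2\}$ sorted decreasingly, giving exactly $2^k$ weights $\lambda_\epsilon$ parametrized by $\epsilon\in\{\pm1\}^k$, with $\lambda=\lambda_{(+,\ldots,+)}$. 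I label the $\lambda_\epsilon$ by the vertices of $S_k$ recursively: $S^1=\{\lambda_\epsilon:\epsilon_k=+1\}$, $S^2=\{\lambda_\epsilon:\epsilon_k=-1\}$, each a copy of $S_{k-1}$, and recursively on $\epsilon_{k-1},\ldots,\epsilon_1$ inside each; the minimal vertex is $\lambda$.

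For each arrow $\mu\to\nu$ of $S_k$ I need to exhibit a nonzero standard homomorphism $M_\liep(\nu)\to M_\liep(\mu)$. Arrows inside $S^1$ or $S^2$ involve reflections affecting only $k-1$ of the coordinates of the first block, the remaining coordinate being fixed at $\pm(2k-1)/2$; existence is inherited from the induction applied to the analogous $(k-1)$-problem, the fixed extremal coordinate acting as an inert constant. The new connecting arrows $\phi(x)\to\psi(x)$ between $S^{1,2}$ and $S^{2,1}$ simultaneously flip the signs of the two largest half-integers $(2k-3)/2$ and $(2k-1)/2$; the corresponding reflection is of the form $s_{\epsilon_i+\epsilon_j}$ for suitable positions $i,j$ in the first block. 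A direct computation gives the operator order $(\mu-\nu)(E)=2$ and the $\delta$-grading difference $(w\delta-w'\delta)(E)=4$, so by Lemma $\ref{grading2length}$ the Hasse distance between the corresponding elements of $W^p$ is either $1$ or $3$. In the single-step case Theorem $\ref{genlepowski}$ produces the standard homomorphism directly; in the three-step case a contradiction argument paralleling the proof of Lemma $\ref{2diraksodd}$ (using Theorem $\ref{zeromap}$ to rule out $\liep$-dominant intermediates) excludes vanishing.

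The hardest part is the completeness claim: every nonzero standard homomorphism among the $2^k$ GVMs is a composition of arrows in $S_k$. For any pair $\lambda_\epsilon,\lambda_{\epsilon'}$ Bruhat-related in $W^p$ but not connected by a single arrow in $S_k$, I would invoke Theorem $\ref{zeromap}$ to exhibit a simple root $\alpha\in\Delta-\Sigma$ with $M_\lieb(w'\tilde\lambda)\subset M_\lieb(s_\alpha w\tilde\lambda)$, forcing the standard homomorphism to vanish unless it already factorizes through arrows of $S_k$. The sub-case analysis follows the pattern of Lemma $\ref{2diraksodd}$ (types (1), (2), (3)), now carried out recursively across the levels of $S_k$. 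Existence (Theorem $\ref{genlepowski}$) is cheap, but ruling out spurious homomorphisms is the combinatorial obstacle: for each non-adjacent pair one must locate a $\liep$-dominant intermediate Verma submodule that forces vanishing. The recursive structure of $S_k$ is chosen so that this argument closes inductively, but the bookkeeping—especially between $S^1$ and $S^2$, where multiple sign bits may flip simultaneously—becomes delicate for large $k$.
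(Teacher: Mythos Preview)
Your overall strategy---induction on $k$ along the recursive structure of $S_k$, with the connecting arrows handled as in Lemma~\ref{2diraksodd}---matches the paper's. But the inductive step has a real gap. You assert that for arrows inside $S^1$ or $S^2$ ``existence is inherited from the induction applied to the analogous $(k-1)$-problem, the fixed extremal coordinate acting as an inert constant.'' This is the heart of the argument and it is not free: the ambient Lie algebras change (from $\so(n+2(k-1))$ to $\so(n+2k)$), so you must prove that the maps $i:[a_1,\ldots,a_{k-1}|\ldots]\mapsto[(2k-1)/2,a_1,\ldots,a_{k-1}|\ldots]$ and $j:[a_1,\ldots,a_{k-1}|\ldots]\mapsto[a_1,\ldots,a_{k-1},-(2k-1)/2|\ldots]$ preserve \emph{both} the existence \emph{and} the nonexistence of nonzero standard GVM homomorphisms. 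The paper does this in two steps: first for true Verma modules, using Theorem~\ref{truevermamodulesmap} and the observation that any chain of root reflections connecting $i(\mu)$ to $i(\nu)$ must fix the first coordinate (since $(2k-1)/2$ is maximal and the differences are sums of positive roots); second, transferring to GVMs via Theorem~\ref{zeromap}, checking that the relevant simple root cannot be $\alpha_1$. Without this bidirectional preservation, your induction establishes neither existence of the $S^1$/$S^2$ arrows nor completeness inside those blocks.

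This same omission is why completeness looks hard to you. Once $i$ and $j$ are shown to preserve the full homomorphism structure, completeness inside $S^1$ and $S^2$ is immediate by induction, and only the cross-terms between $S^1$ and $S^2$ remain---which reduce easily to the true Verma module picture (Theorem~\ref{truevermamodulesmap}). You do not need to analyse every non-adjacent pair separately. A smaller point: your claim $(w\delta-w'\delta)(E)=4$ for the connecting arrows holds only when $(n-1)/2\geq k-1$; when $(n-1)/2<k-1$ the value is $2$, so the Hasse distance is $1$ and Theorem~\ref{genlepowski} applies directly without the three-step case analysis.
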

\begin{proof}
The condition on a weight $\nu=[a_1,\ldots,a_k|b_1,\ldots,b_{(n-1)/2}]$ to be from $P_{\liep}^{++}+\delta$ 
is $a_1>\ldots>a_k$, $b_1>\ldots>b_{(n-1)/2}>0$, $a_i-a_j\in\Z$, $b_i-b_j\in\Z$ and the $b_i$'s are all
integers or all half-integers. Simple combinatorics implies that, if $\nu\in P_{\liep}^{++}+\delta$ 
is on the orbit of $\lambda$ and $k\neq (n-1)/2$, the only possibility is 
$\nu=[a_1,\ldots,a_k|(n-1)/2,\ldots,2,1]$, where $(a_1,\ldots,a_k)$
is some strictly decreasing sign-permutation of $((2k-1)/2,\ldots,3/2,1/2)$.

These conditions imply that there is either $(2k-1)/2$ on the first position, or $-(2k-1)/2$ on the $k$-th
position and the remaining of the first $k$ positions contains a decreasing sign-permutation of $((2k-3)/2,\ldots,1/2)$. This proves that there are $2^k$ such weights. Define $R_k$ to be the set of these weights, $R_k^1$ to be the set of weights with $(2k-1)/2$ on the first position and $R_k^2$ to be the set of weights with $-(2k-1)/2$ on the $k$-th position.

We will prove that the map $i: R_{k-1}\to R_k^1$ given by 
$([a_1,\ldots,a_{k-1}|\ldots])\mapsto ([(2k-1)/2,a_1,\ldots,a_{k-1}|\ldots])$
preserves the existence of nonzero standard GVM homomorphisms (i.e. there exists a nonzero standard $M_{\liep_{k-1,n}}(\nu)\to M_{\liep_{k-1,n}}(\mu)$ if and only if there exists a nonzero standard 
$M_{\liep_{k,n}}(i(\nu))\to M_{\liep_{k,n}}(i(\mu))$, the subscripts $k,n$ means that the rank of the 
Lie algebra is $k+(n-1)/2$). 

We start with the Borel case $\liep=\lieb$. 
Let $M_{\lieb_{k-1,n}}(\nu)\to M_{\lieb_{k-1,n}}(\mu)$ be a true Verma module homomorphism. 
Let as denote by $\tilde{i}$ the map $\lieh_{k-1,n}^*\to\lieh_{k,n}^*$ defined by
$[a_1,\ldots,a_{k-1}|b_1,\ldots,b_{(n-1)/2}]\mapsto [0,a_1,\ldots,a_{k-1}|b_1,\ldots,b_{(n-1)/2}]$. 
According to $\ref{truevermamodulesmap}$, there exists a nonzero homomorphism
$M_{\lieb_{k-1,n}}(\nu)\to M_{\lieb_{k-1,n}}(\mu)$ if and only if there exists a sequence 
$\mu=\mu_0, \mu_1, \ldots, \mu_l=\nu$ of weights connected by root reflections so that 
$\mu_j-\mu_{j-1}$ is a positive integral multiple of a positive root from $\Phi_{k-1,n}^+$ 
(this is the set of positive roots of $\lieg=\so(2(k-1)+n)$) for all $j$. 
In this case, the sequence $i(\mu)={i}(\mu_0), {i}(\mu_1), \ldots, {i}(\mu_l)={i}(\nu)$ 
has similar properties, because $\mu_j=s_\gamma \mu_{j-1}$ implies 
${i}(\mu_j)=s_{\tilde{i}(\gamma)} i(\mu_{j-1})$ and for each 
$\gamma\in\Phi_{k-1,n}^+$, $\tilde{i}(\gamma)\in\Phi_{k,n}^+$. 
So, there exists a nonzero homomorphism 
$M_{\lieb_{k,n}}(i(\nu))\to M_{\lieb_{k,n}}(i(\mu))$. 
On the other hand, if there exists a nonzero  homomorphism 
$M_{\lieb_{k,n}}(i(\nu))\to M_{\lieb_{k,n}}(i(\mu))$, it follows that there is a sequence 
$i(\mu)=[(2k-1)/2, \hbox{something}]=i(\mu_0), i(\mu_1)$, $\ldots$, $i(\mu_l)=[(2k-1)/2, \hbox{something}]$, $i(\mu_j)=s_{\gamma_j}i(\mu_{j-1})$, so that $i(\mu_j)-i(\mu_{j-1})$ 
is a positive multiple of a positive root. Therefore, the coefficient on the first position 
is not increasing in this sequence: so, it is constant $(2k-1)/2$. 
This means that the root reflections $\gamma_j$ don't interchange the first coordinate with some other and the roots $\gamma_j$ have zeros on first positions. So, there exist $\tilde{\gamma_j}\in\Phi_{k-1,n}^+$ so that 
$\tilde{i}\tilde{\gamma_j}=\gamma_j$ and we obtain that there exists a nonzero homomorphism 
$M_{\lieb_{k-1,n}}(\nu)\to M_{\lieb_{k-1,n}}(\mu)$.

It follows from Theorem \ref{zeromap} that the standard homomorphism  
$M_{\liep_{k-1,n}}(\nu)\to M_{\liep_{k-1,n}}(\mu)$ is zero
if and only if $M_{\lieb_{k-1,n}}(\nu)\subset M_{\lieb_{k-1,n}}(s_{\alpha_j}\mu)$ 
for some simple root
$\alpha_j\neq \alpha_{k-1}$. Then 
$M_{\lieb_{k,n}}(i(\nu))\subset M_{\lieb_{k,n}}(s_{\tilde{i}(\alpha_{j})} i(\mu))$
follows from the previous paragraph, $\tilde{i}(\alpha_j)\neq \alpha_k$ 
and the standard homomorphism $M_{\liep}(i(\nu))\to M_{\liep}(i(\mu))$ 
is zero as well. 
On the other hand, if $M_{\liep}(i(\nu))\to M_{\liep}(i(\mu))$ is zero, 
then $M_{\lieb_{k,n}}(i(\nu))\subset M_{\lieb_{k,n}}(s_{\alpha_i} i(\mu))$
for some simple root $\alpha_i\neq \alpha_k$. 
If $i=1$, then $M_{\lieb_{k,n}}(i(\nu))\subset M_{\lieb_{k,n}}(s_{\alpha_1}i(\mu))$ 
implies $s_{\alpha_1}(i(\mu))-i(\nu)$ is a sum of positive roots. 
But $i(\nu)$ contains $(2k-1)/2$ on the first position and $s_{\alpha_1}(i(\mu))$ contains
a number strictly smaller then $(2k-1)/2$ on the first position, what is a contradiction.
Therefore, $i>1$ and there is a $\alpha\in\Delta_{k-1,n}$, $\alpha\neq \alpha_{k-1}$ so that 
$\tilde{i}(\alpha)=\alpha_i$. Then $M_{\lieb_{k-1,n}}(\nu)\subset M_{\lieb_{k-1,n}}(s_{\alpha}\mu)$,
and the map $M_{\liep}(\nu)\to M_{\liep}(\mu)$ is zero as well.

We see that for any $\mu,\nu\in R_{k-1}$, there exists a nonzero standard GVM homomorphism 
$M_{\liep_{k-1,n}}(\nu)\to M_{\liep_{k-1,n}}(\mu)$ if and only if there exists a nonzero
standard homomorphism $M_{\liep_{k,n}}(i(\nu))\to M_{\liep_{k,n}}(i(\mu))$. 
Similarly, we can define the map $j: R_{k-1}\to R_k^2$ by $[a_1,\ldots,a_{k-1}|\ldots]\mapsto [a_1,\ldots,a_{k-1},-(2k-1)/2|\ldots]$
and prove that there exists a nonzero standard GVM homomorphism 
$M_{\liep_{k-1,n}}(\nu)\to M_{\liep_{k-1,n}}(\mu)$ if and only if there exists a nonzero
standard homomorphism $M_{\liep_{k,n}}(j(\nu))\to M_{\liep_{k,n}}(j(\mu))$. 

%So, assigning the GVM's $M_{\liep}(\mu)$
%with $\mu\in R_k$ to $S_k$ so that the GVM's with $\mu\in R_{k}^1$ are assigned to $S_k^1$ and
%the GVM's with $\mu\in R_k^2$ are assigned to $S_k^2$ for any $k$, we see that the map $i: R_{k-1}\to R_k^1$ defines  
%a map between sets of GVM's $i: M_{\liep_{k-1,n}}(\mu)\mapsto M_{\liep_{k,n}}(i(\mu))$ 
%that preserves the existence of nonzero standard homomorphisms.  
%Similarly, one could show that the map $j: R_{k-1}\to R_k^2$ 
%defined by $[a_1,\ldots,a_{k-1}|\ldots]\mapsto [a_1,\ldots,a_{k-1},-(2k-1)/2|\ldots]$ also induces a map
%between GVM's $j: M_{\liep_{k-1,n}}(\mu)\mapsto M_{\liep_{k,n}}(j(\mu))$ that preserves the existence of
%nonzero standard GVM homomorphisms (we use the same letters $i,j$ for the maps between GVM's).

Let as now denote the maps $i$ and $j$ described before by $i_k$ and $j_k$, specifying the dimension of the (resulting) 
weights.
It remains to prove that for each $x\in R_{k-2}$ there exists a nonzero standard GVM homomorphism 
$M_{\liep_{k,n}}(j_{k}i_{k-1}(x))\to M_{\liep_{k,n}}(i_k j_{k-1}(x))$. 
In other words, we want to show that for any decreasing sign-permutation $(a_2,\ldots,a_{k-1})$ of 
$((2k-5)/2,\ldots,1/2)$, there exists a nonzero standard homomorphism $M_{\liep}(\nu)\to M_{\liep}(\mu)$, where
$$\nu=[\frac{2k-3}{2},a_2,\ldots,a_{k-1},-\frac{2k-1}{2}|\ldots,2,1],$$
$$\mu=[\frac{2k-1}{2},a_2,\ldots, a_{k-1}, -\frac{2k-3}{2}|\ldots,2,1].$$
It follows from $\ref{truevermamodulesmap}$ that there is a homomorphism of the corresponding true Verma modules 
(the weights are connected by the root reflection with respect to $[1,0,\ldots,0,1|0,\ldots,0]$.)

There is a unique $\lieg$-dominant weight $\tilde\lambda$ on the orbit of $\lambda$:
$\tilde\lambda =[(n-1)/2, (n-1)/2-1, \ldots, k, k-1/2, k-1, k-3/2,\ldots, 3/2,1,1/2]$
in case $(n-1)/2\geq k$ and $\tilde\lambda=
[k-1/2, k-3/2,\ldots, n/2, n/2-1/2, n/2-1,\ldots,1,1/2]$ in case $(n-1)/2<k$.

Let $w$ resp. $w'$ be the Weyl group element taking $\tilde\lambda$ to
$\mu$ resp. $\nu$. Simple computations show that, if $(n-1)/2\geq k-1$, then 
% Let $w\in W$ be the Weyl group element that takes 
$w$ takes $\delta=\half[\ldots,5,3,1]$ to 
$\half[4k-3,b_2,\ldots,b_{k-1},-(4k-7)|\ldots]$ where $(b_2,\ldots,b_{k-1})$ 
is some decreasing sign-permutation of $((4k-11)/2,\ldots,5/2,1/2)$ and $w'$ 
takes $\delta$ to $\half[4k-7,b_2,\ldots,b_{k-1},-(4k-3)|\ldots]$.
The difference of the grading element evaluation
is then $(w\delta-w'\delta)(E)=\half((4k-3)-(4k-7)+\sum_j b_j)-\half((4k-7)-(4k-3)+\sum_j b_j)=4$.
If $(n-1)/2<k-1$, then $w$ takes $\delta(E)$ to $[k+n/2-1,\ldots,-(k+n/2-2)|\ldots]$, $w'$ takes
$\delta$ to $[k+n/2-2,\ldots,-(k+n/2-1)|\ldots]$ and $(w\delta-w'\delta)(E)=2$ in this case.

So, in either case, $(w\delta-w'\delta)(E)\leq 4$
and, similarly as in the proof of lemma \ref{2diraksodd}, either $w\to w'$ or
$w\to w_1\to w_2\to w'$. If $w\to w'$, we apply Theorem \ref{genlepowski} and 
see that there is a nonzero standard homomorphism $M_{\liep}(\nu)\to M_{\liep}(\mu)$.

Let $w\to w_1\to w_2\to w'$ and
assume, for the sake of contradiction, that the standard map
$M_{\liep}(w'\tilde\lambda)\to M_{\liep}(w\tilde\lambda)$ is zero. Therefore,
\begin{equation}
\label{cond2}
M_{\lieb}(\nu)=M_{\lieb}(w'\tilde\lambda)\subset M_{\lieb}(s_\alpha w\tilde\lambda)=M_{\lieb}(s_\alpha\mu)
\end{equation}
for some simple root $\alpha\neq\alpha_k$.

The weight $s_\alpha(\mu)$ is one of the following types:
\begin{enumerate}
\item$[a_2,(2k-1)/2,\ldots, a_{k-1}, -(2k-3)/2|\ldots,3,2,1]$ if $\alpha=\alpha_1$
\item$[(2k-1)/2,\ldots,a_{l},a_{l-1},\ldots,-(2k-3)/2|\ldots]$ if $\alpha=\alpha_j$, $1<j<k-1$
\item$[(2k-1)/2,\ldots,-(2k-3)/2,a_{k-1}|\ldots]$ if $\alpha=\alpha_{k-1}$
\item$[(2k-1)/2,\ldots,-(2k-3)/2|(n-1)/2,\ldots,l-1,l,\ldots,2,1]$ if $\alpha=\alpha_j$, $k<j<k+(n-1)/2$
\item$[(2k-1)/2,\ldots,-(2k-3)/2|\ldots,3,2,-1]$ if $\alpha=\alpha_{k+(n-1)/2}$ 
\end{enumerate}

First we show that it is not of type $(1)$. If $\alpha=\alpha_1$, (\ref{cond2}) implies 
$s_{\alpha_1}(\mu)-\nu$ is a sum of positive roots, i.e. 
$$[a_2, (2k-1)/2,\ldots,-(2k-3)/2|\ldots]-[(2k-3)/2,a_2,\ldots,-(2k-1)/2|\ldots]\in\N\Phi^+,$$ 
where $a_2\leq(2k-5)/2$.
But the difference cannot be obtained as a sum of positive roots, 
because it contains a negative number $a_2-(2k-3)/2$ on the first position.

Now assume that $s_\alpha(\mu)$ is of type $(2)-(5)$. 
Because
$$
M_{\lieb}(w'\tilde\lambda)=M_{\lieb}(\nu)\subsetneq M_{\lieb}(s_\alpha\mu)\subsetneq 
M_{\lieb}(\mu)=M_{\lieb}(w\tilde\lambda),
$$
$l(w')-l(w)=3$ and $\nu$ is not connected to $s_\alpha(\mu)$ by any root reflection, 
it follows from
Theorem \ref{truevermamodulesmap} that there must be $\beta_1, \beta_2$ so that
\begin{equation}
\label{subsetsgenodd}
M_{\lieb}(\nu)\subsetneq M_{\lieb}(s_{\beta_1}\nu)=M_{\lieb}(s_{\beta_2} s_\alpha\mu)\subsetneq 
M_{\lieb}(s_\alpha\mu)
\end{equation}
Similarly as in the proof of lemma \ref{2diraksodd}, we will show that $\alpha$ cannot be
of type $(2)-(5)$. Let $\alpha$ be of type $(2)$, i.e. 
\begin{eqnarray*}
s_\alpha(\mu)&=&[(2k-1)/2,\ldots,a_{l},a_{l-1},\ldots,-(2k-3)/2|\ldots],\\
\nu&=&[(2k-3)/2,\ldots,a_{l-1},a_l,\ldots,-(2k-1)/2)|\ldots].
\end{eqnarray*}
The root reflections $s_{\beta_1}$ and $s_{\beta_2}$ cannot interchange an integer with a half-integer, because
of the integrality conditions $s_\alpha(\mu)(H_{\beta_2})\in\N$ and 
$s_{\beta_2}s_\alpha(\mu)(H_{\beta_1})\in\N$.
There are two possibilities: either $s_{\beta_2}$ 
interchanges $a_l$ with $a_{l-1}$ and $s_{\beta_1}$ sign-interchanges
$((2k-1)/2,-(2k-3)/2)$ with $((2k-3)/2,-(2k-1)/2)$ on the particular positions, or $s_{\beta_2}$ sign-interchanges
$((2k-1)/2,-(2k-3)/2)$ with $((2k-3)/2,-(2k-1)/2)$ and
$s_{\beta_1}$ interchanges $a_l$ with $a_{l-1}$.
In the first case, $\beta_2=\alpha$ and (\ref{subsetsgenodd}) implies $M_{\lieb}(\mu)\subsetneq M_{\lieb}(s_\alpha\mu)$, 
which contradicts the fact that $M(s_\alpha\mu)\subsetneq M(\mu)$
for a simple root $\alpha\neq\alpha_k$ and $\mu\in P_{\liep}^{++}+\delta$.
In the second case, $\beta_1=\alpha$ and (\ref{subsetsgenodd}) implies
$M_{\lieb}(\nu)\subsetneq M_{\lieb}(s_\alpha\nu)$, which also contradicts $M_{\lieb}(s_\alpha\nu)\subsetneq M_{\lieb}(\nu)$.

Let $\alpha$ be of type $(3)$, i.e.
\begin{eqnarray*}
s_\alpha(\mu)&=&[(2k-1)/2,\ldots,-(2k-3)/2,a_{k-1}|\ldots],\\
\nu&=&[(2k-3)/2,\ldots,a_{k-1},-(2k-1)/2)|\ldots]
\end{eqnarray*}
If either $\beta_1=\alpha$ or $\beta_2=\alpha$, we get contradiction similarly as in case $(2)$. 
But there is no other possibility, because the $a_{k-1}$ on the $k$-th position has to move somehow
to the $(k-1)$-th position: if $\beta_2$ would fix it, then $\beta_1=\alpha$, if $\beta_2$ would take it
to the $(k-1)$-th position, then $\beta_2=\alpha$ and if $\beta_2$ would take it (possibly with a minus sign) 
to the $l$-th position for $l\neq k,k-1,1$, then $\beta_1$ has to (sign-) interchange the $l$-th and $(k-1)$-th
position, so $s_{\beta_1} s_{\beta_2}$ would fix the $(2k-1)/2$ on the first position, which is impossible.
The last possibility is $l=1$: this would mean that $\beta_2$ takes $a_{k-1}$ to the first position (possibly with a minus sign),
but $|a_{k-1}|<(2k-3)/2$ implies that $s_{\beta_2}s_\alpha(\mu)$ has a smaller number on the first position
as $\nu$ and $s_{\beta_2}s_\alpha(\mu)-\nu$ is not expressible as a sum of positive roots. 
This contradicts $M_{\lieb}(\nu)\subsetneq M_{\lieb}(s_{\beta_2}s_\alpha\mu)$.

In case $(4)$, we have 
\begin{eqnarray*}
s_\alpha(\mu)&=&[(2k-1)/2,\ldots,-(2k-3)/2|(n-1)/2,\ldots,l-1,l,\ldots,2,1]\\
\nu&=&[(2k-3)/2,\ldots,-(2k-1)/2)|(n-1)/2,\ldots,l,l-1,\ldots,2,1]
\end{eqnarray*}
Because the reflections with respect to $\beta_1,\beta_2$ cannot interchange an integer and a 
half-integer, it follows that one of them interchanges $l$
with $l-1$, so either $\beta_1=\alpha$ or $\beta_2=\alpha$ 
and we get a contradiction as in case $(2)$. The same happens in case $(5)$.

In either case, we get a contradiction, so
the standard map $M_{\liep}(\nu)\to M_{\liep}(\mu)$
is nonzero. 

So, we can assign  weights from $R_k$ to the vertices of the graph $S_k$ so that 
we assign the weights from $R^1$ to $S^1$, the weights from $R^2$ to $S^2$ 
and the proof follows by induction.

Finally, it is easy to check that any possible nonzero standard GVM homomorphisms on the orbit 
is a composition of the homomorphisms described above by reducing this problem to true Verma module homomorphisms and
considering theorem \ref{truevermamodulesmap}. 
\end{proof}

In case $k=(n-1)/2$, all the GVM homomorphisms described in the last theorem exist as well, but the whole orbit contains also
weights of type $[\ldots,2,1|(2k-1)/2,\ldots,3/2,1/2]$. There is no nonzero GVM homomorphism $M_{\liep}(\nu)\to M_{\liep}(\mu)$ 
where $\mu$ is of such type and $\nu$ of the type $[\ldots,3/2,1/2|\ldots,2,1]$ (or opposite). 

\subsection{Orders of the operators}
\begin{theorem}
All the operators dual to the homomorphisms described in theorem \ref{kdirgvmoddorb} have order $1$ or $2$. For any $k$,
the connecting operators $\phi(x)\to \psi(x)$ (described in definition \ref{S_k}) have order $2$ and the graph homomorphisms
$S_{k-1}\to S_k^1$ and $S_{k-1}\to S_k^2$ respect orders. This determines, by induction, all the order of all the operators. 
\end{theorem}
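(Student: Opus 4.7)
The plan is to apply Theorem \ref{grading2degree} directly. Since the grading element of the pair $(\lieg,\liep)$ evaluates a weight $[a_1,\ldots,a_k|b_1,\ldots,b_{(n-1)/2}]$ on its first $k$ entries as $\sum_{i=1}^k a_i$, and since all highest weights appearing in Theorem \ref{kdirgvmoddorb} share the same last coordinates $((n-1)/2,\ldots,2,1)$, for any standard homomorphism $M_{\liep}(\nu)\to M_{\liep}(\mu)$ the number $(\mu-\nu)(E)$ reduces to a sum of differences in the first $k$ coordinates. Whenever this integer is $1$ or $2$, Theorem \ref{grading2degree} identifies it with the order of the dual operator, so the entire theorem reduces to a coordinate-level calculation plus an induction.

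I would handle the three types of arrows separately. For a connecting arrow $\phi(x)\to\psi(x)$, the endpoints agree on all coordinates except that $((2k-1)/2,-(2k-3)/2)$ in positions $1$ and $k$ for the source is replaced by $((2k-3)/2,-(2k-1)/2)$ for the target. A short computation yields $(\mu-\nu)(E)=1+1=2$, giving order exactly $2$. For the graph embeddings $i_k:R_{k-1}\to R_k^1$ and $j_k:R_{k-1}\to R_k^2$ used in the proof of Theorem \ref{kdirgvmoddorb}, the maps prepend $(2k-1)/2$ (resp.\ append $-(2k-1)/2$) and leave the remaining coordinates untouched; hence $(i_k(\mu)-i_k(\nu))(E)=(\mu-\nu)(E)$, and analogously for $j_k$. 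Thus the bound from Theorem \ref{grading2degree} is invariant under these embeddings, so the orders of corresponding arrows agree.

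The induction is then immediate. The base case $k=1$ is the single arrow $M_{\liep}([-1/2|\ldots,2,1])\to M_{\liep}([1/2|\ldots,2,1])$ corresponding to the classical Dirac operator; here $(\mu-\nu)(E)=1/2-(-1/2)=1$, so the operator is first order. Assuming the statement holds for $S_{k-1}$, the embedding step propagates it verbatim to $S_k^1$ and $S_k^2$, while every connecting arrow contributes order $2$ by the first computation. Since every arrow of $S_k$ falls into exactly one of these three families, all orders in $S_k$ remain in $\{1,2\}$, and the induction closes.

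I do not foresee any genuine obstacle: the whole proof rests on the equality clause of Theorem \ref{grading2degree} for values $\leq 2$, together with the observation that the inductive construction of $S_k$ either preserves the first-$k$-coordinate sum or increments it by exactly $2$. The only minor care point is to verify that every standard homomorphism described in Theorem \ref{kdirgvmoddorb} indeed arises either as a connecting arrow or as an image under $i_k$ or $j_k$, which is immediate from the construction there.
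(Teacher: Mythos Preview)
Your proposal is correct and follows essentially the same route as the paper: both proofs compute $(\mu-\nu)(E)$ for each arrow and invoke Theorem~\ref{grading2degree}, using that the value comes out to $2$ for connecting arrows and $1$ for the arrows that flip a single $1/2$ to $-1/2$. The paper simply records these two computations directly, while you make the inductive structure explicit by observing that $i_k$ and $j_k$ preserve $(\mu-\nu)(E)$; this is a cosmetic difference rather than a substantive one.
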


If we draw a line for first order operators and a double-line for second order operators in the diagrams, we obtain the following pictures:

\centerline{
\includegraphics{bgg.7}
\hspace{2cm}
\includegraphics{bgg.8}
}

\begin{proof}
Recall that the action of a weight on the grading element is 
$$[a_1,\ldots, a_k|b_1,\ldots, b_{(n-1)/2}](E)=\sum_j a_j.$$
Applying theorem \ref{grading2degree} and the knowledge of the highest weights of the particular representations, we see
that 
\begin{eqnarray*}
&&\mmmm
[(\frac{2k-1}{2}, a_2,\ldots, a_{k-1}, -\frac{2k-3}{2}|\ldots](E)-
[\frac{2k-3}{2},a_2,\ldots,a_{k-1}, -\frac{2k-1}{2}|\ldots](E)=\\
&&\mmmm
=(\frac{2k-1}{2}-\frac{2k-3}{2})-(\frac{2k-3}{2}-\frac{2k-1}{2})=2,
\end{eqnarray*}
so the ``connecting'' operators are of second order. The other operators are of first order, because
\begin{eqnarray*}
&&\mmmm
[a_1,\ldots,a_{j-1},\half,a_{j+1},\ldots|\ldots](E)-[a_1,\ldots,a_{j-1},-\half,a_{j+1}\ldots|\ldots](E)=\\
&&\mmmm
\half-(-\half)=1.
\end{eqnarray*}

\end{proof}


\begin{thebibliography}{99}
\bibitem{bgg1}{Bernstein I.N., Gelfand I.M.,  Gelfand S.I.}, {Differential Operators on the Base Affine Space and a Study of g-Modules}, {Lie Groups and Their Representations, I. M. Gelfand, Ed., Adam Hilger, London, 1975}
\bibitem{bgg2}{Bernstein I.N., Gelfand I.M.,  Gelfand S.I.}, {Structure of Representations that are generated by vectors of highest weight}, {Functional. Anal. Appl. 5 (1971)}
\bibitem{Bjorner}{Bjorner, A., Brenti, F.}, {Combinatorics of Coxeter groups}, {Springer, 2005} 
\bibitem{BuresDam}{Bures J., Damiano A., Sabadini I.}, {Explicit resolutions for the complex of several Fueter operators}{ J. Geom. Phys. vol. 57 (2007), 765-775}
\bibitem{Cap}{Cap A., Slov\'ak J.}, {Parabolic Geometries}, {preprint}
\bibitem{bluebook}{Colombo F., Sabadini I., Sommen F., Struppa D.C.}, {Analysis of Dirac Systems and Computational Algebra}, {Birkh\"auser, 2004}
\bibitem{InvRes}{Colombo F., Soucek V., Struppa D.}, {Invariant resolutions for several Fueter operators}, {J. Geom. Phys. vol. 56, (2006), 1175-1191}
\bibitem{Friedrich}{Friedrich T.}, {Dirac-Operatoren in der Riemannschen Geometrie}, {AMS, 2000}
\bibitem{GM}{Gilbert J.E, Murray M.A.}, {Clifford algebras and Dirac operators in harmonic analysis}, {Cambridge studies in advanced mathematics, vol. 26, Cambridge, 1991}
\bibitem{srni}{Franek P.}, {Generalized Verma module homomorphisms in singular character}, {The proceedings of the 25th Winter School "Geometry and Physics", Srn\'\i\  2006}
\bibitem{Tampere}{Franek P.}, {Dirac operator in two variables from the view point of parabolic geometry}, {Proceedings of the Conference ``Function Theories in Higher Dimensions'', Tampere 2006}
\bibitem{dizertace}{Franek P.}, {Several Dirac operators in parabolic geometry}, {Dissertational Thesis}
\bibitem{GW}{Goodmann R., Wallach N. R.}, {Representations and Invariants of the Classical Groups}, {Cambridge University Press, Cambridge 1998}
\bibitem{Humphries}{Humphreys J.}, {Introduction to Lie Algebras and Representation Theory}, {Springer Verlag, 1980}
\bibitem{Lepowski}{Lepowski J.}, {A generalization of the Bernstein-Gelfand-Gelfand resolution}, {J. Algebra, 49 (1977), 496-511}
\bibitem{KrumpSoucek}{Krump L., Soucek V.}, {The generalized Dolbeault complex in  two Clifford  variables}, {Proceedings of the Conference ``Function Theories in Higher Dimensions'', Tampere 2006}
\bibitem{svata}{Krysl, S.}, {Classification of $1^{st}$ order symplectic spinor operators 
in contact projective geometries}, {to appear in  Diff. Geom. Appl.}
\bibitem{Sharpe}{Sharpe R.}, {Differential geometry}, {Springer, 1977}
\bibitem{Slovak_DrSc}{Slov\'ak J.}, {Parabolic geometries}, {Research Lecture Notes, Part of DrSc-dissertation, Masaryk University, 1997, 70pp, IGA Preprint 97/11 (University of Adelaide)}
\bibitem{Struppa3}{Sabadini I., Sommen F., Struppa D. C., Van Lancker P.}, {Complexes of Dirac operators in Clifford algebras}, { Math. Zeit., {239} no. 2 (2002), 293--320}
\bibitem{Verma}{Verma N.}, {Structure of certain induced representations of complex semisimple Lie algebras}, {Bull. Amer. Math. Soc. 74 (1968)}
\end{thebibliography}
\end{document}